\documentclass[11pt]{article}

\addtolength{\textwidth}{0.1in} \addtolength{\textheight}{0.4in}

\pagestyle{plain}

\usepackage{amsmath,amsfonts,amssymb,amsthm,latexsym}

\newcommand\qbi[3]{{{#1}\atopwithdelims[]{#2}}_{#3}}
\newcommand\bi[2]{{{#1}\atopwithdelims(){#2}}}

\newcommand{\la}{\lambda}
\def\C{\mathbb{ C}} \def\N{\mathbb{ N}}
 \def\Z{\mathbb{Z}}
\def\Q{\mathbb{Q}}

\newtheorem{theo}{Theorem}[section]
\newtheorem{prop}[theo]{Proposition}
\newtheorem{coro}[theo]{Corollary}
\newtheorem{conj}[theo]{Conjecture}
\newtheorem{lem}[theo]{Lemma}
\newtheorem{rem}[theo]{Remark}
\newenvironment{abstr}{\noindent{\bf Abstract. }}{\smallskip}
\numberwithin{equation}{section}

\title{Diophantine properties for $q$-analogues of Dirichlet's beta function at positive integers}
\author{Fr\'ed\'eric Jouhet and Elie Mosaki}
\date{}

\begin{document}
\maketitle

\vspace{0.5 cm}

\noindent {\small \emph{2000 Mathematics Subject Classification}: Primary 11J72; Secondary 11M36; 33D15.\\
\emph{Key words and phrases}: $q$-analogues of the values of Dirichlet's beta function at integers; modular forms; irrationality; basic hypergeometric series.}

\vspace{0.7 cm}

\begin{abstr}
\small In this paper, we define $q$-analogues of Dirichlet's beta function at positive integers, which can be written as $\beta_q(s)=\sum_{k\geq1}\sum_{d|k}\chi(k/d)d^{s-1}q^k$ for $s\in\N^*$, where $q$ is a complex number such that $|q|<1$ and $\chi$ is the non trivial Dirichlet character modulo $4$. For odd $s$, these expressions are connected with the automorphic world, in particular with Eisenstein series of level $4$. From this, we derive through Nesterenko's work the transcendance of the numbers $\beta_q(2s+1)$ for $q$ algebraic such that $0<|q|<1$. Our main result concerns the nature of the numbers $\beta_q(2s)$: we give a lower bound for the dimension of the vector space over $\Q$ spanned by
$1,\beta_q(2),\beta_q(4),\dots,\beta_q(A)$, where $1/q\in\Z\setminus\{-1;1\}$ and $A$ is an even integer. As consequences, for $1/q\in\Z\setminus\{-1;1\}$, on the one hand there is an infinity of irrational numbers among $\beta_q(2),\beta_q(4),\dots$, and on the other hand at least one
of the numbers $\beta_q(2),\beta_q(4),\dots, \beta_q(20)$ is irrational.
\end{abstr}

\vspace{0.5 cm}

\section{Introduction}

For any complex number $s$ with $Re(s)\geq1$, Dirichlet's beta function at $s$ is defined by:
$$\beta(s)=\sum_{k=0}^\infty\frac{(-1)^k}{(2k+1)^s}\cdot$$
Recall Euler's identity:
$$\beta(2m+1)=\frac{(-1)^mE_{2m}}{2^{2m+2}(2m)!}\pi^{2m+1},$$
where  $m\in\N$ and the rational numbers $E_{2m}$ are Euler numbers defined by $1/\cosh(z)=\sum_{k\geq0}E_kz^k/k!$. Thus, as for the values of Riemann's zeta function at odd positive integers, Lindemann's Theorem yields that \emph{for $m\in\N$, $\beta(2m+1)$ is a transcendental number}. However, nothing similar can be said concerning the values at even positive integers; the best known result in that direction is due to Rivoal and Zudilin \cite{RZ}: \emph{at least one of the numbers $\beta(2),\beta(4),\dots,\beta(12)$ is irrational}. \\

In this article, we define $q$-analogues of the values of $\beta$ at positive integers. They can be written for $s\in\N^*$ and for any complex number $q$ with $|q|<1$:
\begin{equation}\label{betaq}
\beta_q(s):=\sum_{k\geq1}k^{s-1}\frac{q^k}{1+q^{2k}}=\sum_{k\geq1}\sum_{d|k}\chi(k/d)d^{s-1}q^k,
\end{equation}
where $\chi$ is the non trivial Dirichlet character modulo~4, defined by $\chi(2m+1)=(-1)^m$ and $\chi(2m)=0$. One can justify the term of $q$-analogue by the following relation, valid for $s\in\N^*$ (see the last section of this paper for a proof):
$$\lim_{q\to 1}(1-q)^s\beta_q(s)=(s-1)!\beta(s).$$

Similarly to the $q$-analogues of Riemann's zeta function at even positive integers considered in \cite{KRZ, JM}, our definition \eqref{betaq} is related to modular forms when $s$ in an odd positive integer. Indeed, consider the case $s=1$, for which the first expression of \eqref{betaq} can be written: 
\begin{equation}\label{betaq1}
\beta_q(1)=\sum_{k\geq1}\sum_{m\geq0}(-1)^mq^{(2m+1)k}=\sum_{m\geq0}(-1)^m\frac{q^{2m+1}}{1-q^{2m+1}}\cdot
\end{equation}
This yields immediately $\beta_q(1)=(\pi_{q}-1)/4$, where $\pi_q$ is a $q$-analogue of $\pi$. The series $\pi_q$ is considered in \cite{BZ, BZ2}, where an upper bound for its irrationality exponent is given. We also have (see \cite{BZ}):
$$\beta_q(1)=\sum_{k\geq1}q^k\sum_{d|k}\chi(d)=\frac{\theta^2(q)-1}{4},$$
where $\theta(q):=\sum_{n\in\mathbb{Z}}q^{n^2}$ is the classical theta function. This shows that if we set $q=\mbox{e}^{2i\pi z}$, $\beta_q(1)$ is, up to a rational constant, the Fourier expansion of a weight $1$ modular form on $\Gamma_1(4)$ \cite[p. 138, Proposition 30]{Ko}. Moreover, as remarked in \cite{BZ}, Nesterenko's algebraic independance Theorem from \cite{Ne2} shows that $\theta(q)$, and therefore $\beta_q(1)$, is a transcendental number when $q$ is algebraic such that $0<|q|<1$.

\noindent Concerning the other values at odd positive integers, one sees that for $s\geq1$ and $q=\mbox{e}^{2i\pi z}$, $\beta_q(2s+1)$ is also the Fourier expansion, with algebraic coefficients, of a weight $2s+1$ modular form  on $\Gamma_1(4)$. More precisely, consider the level~$4$ Eisenstein series \cite[p. 131]{Ko}:
$$G_{2s+1}^{(1,0)}(z):=\sum_{{(m_1,m_2)\in\mathbb{Z}^2\atop (m_1,m_2)\equiv(1,0)\,\mbox{{\scriptsize mod}}\,4}}\frac{1}{(m_1z+m_2)^{2s+1}}\cdot$$
Then we have the following Fourier expansion \cite[proposition 22]{Ko}:
$$G_{2s+1}^{(1,0)}(z)=\frac{i}{(2s)!}\left(\frac{\pi}{2}\right)^{2s+1}\sum_{k\geq1}\sum_{d|k}\chi(k/d)d^{2s}q^{k/4}.$$
Hence
$$\beta_q(2s+1)=i(-1)^{s+1}\frac{E_{2s}}{2\beta(2s+1)}\,G_{2s+1}^{(1,0)}(4z),$$
and it is not difficult to see that $z\mapsto G_{2s+1}^{(1,0)}(4z)$ is a weight $2s+1$ modular form on $\Gamma_1(4)$. Set $\phi_s(q):=\beta_q(2s+1)/\theta^{4s+2}(q)$, which is a modular function (with weight 0) on $\Gamma_1(4)$, having a Fourier expansion with algebraic coefficients. Thus $\phi_s$ is algebraic on $\mathbb{Q}(J)$ (see \cite[p. 144, Problem 7]{Ko}), where $J$ is the modular invariant. Assume first that for $s\geq1$, $\phi_s$ is not a constant. We can deduce that $J(q)$ and $\phi_s(q)$ are algebraically dependant. Assume from now on that $q$ is algebraic such that $0<|q|<1$. Nesterenko's algebraic independance Theorem from \cite{Ne2} shows that $J(q)$ and $\theta(q)$ are algebraically independant. Therefore $\beta_q(2s+1)$ and $\theta(q)$ are necessarily also algebraically independant, and in particular $\beta_q(2s+1)$ is a transcendental number. If now $\phi_s$ is a constant (necessarily algebraic), then $\theta^{4s+2}(q)$ is a transcendental number by \cite{Ne2}, and so $\beta_q(2s+1)$ is again a transcendental number. To summarize, we have the following result, which can be compared to the transcendence of the values of Dirichlet's beta function at odd positive integers, as well as  the transcendence of the values of $\zeta_q$ at even positive integers in \cite{KRZ}: \emph{for $s\in\N$ and $q$ algebraic such that $0<|q|<1$, $\beta_q(2s+1)$ is a transcendental number}.\\

Consider now the values at even positive integers $\beta_q(2s)$, which do not seem to be directly related to Eisenstein series. We will prove the following Theorem, which is the main result of the present paper:
\begin{theo}
For $1/q\in\Z\setminus\{-1;1\}$ and any odd integer $A\geq 3$, we have the following lower bound:
\begin{equation}\label{efminor}
\dim_{\Q}\left(\Q+\Q\beta_q(2)+\dots+\Q\beta_q(A-1)\right)\geq
f(A),
\end{equation}
where
$$
f(A)=\max_{r\in\N\atop 1\leq r<A/2}f(r;A)\;\;\;\;\mbox{and}\;\;f(r;A):=\frac{4rA+A-4r^2}{\left(\frac{48}{\pi^2}+2\right)A+8r^2-\frac{16}{\pi^2}+\frac{16r}{3}}\cdot
$$
Moreover $f(A)$ satisfies $\displaystyle \;f(A)\sim\frac{\pi}{2\sqrt{\pi^2+24}}\,\sqrt A\; $ when $A\to+\infty$.
\end{theo}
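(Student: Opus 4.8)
\medskip
\noindent The plan is to run the now classical ``basic hypergeometric'' scheme for $q$-analogues of zeta-type values, as in the work of Rivoal and Zudilin on Dirichlet's $\beta$ and in \cite{KRZ,JM} for $\zeta_q$: for each positive integer $n$ (which will tend to infinity) and each admissible $r$, construct an explicit small $\Q(q)$-linear form
\[
S_n=p_{0,n}+p_{2,n}\,\beta_q(2)+p_{4,n}\,\beta_q(4)+\dots+p_{A-1,n}\,\beta_q(A-1),\qquad p_{i,n}\in\Q(q),
\]
estimate the denominators and sizes of the $p_{i,n}$ and of $S_n$, feed these into Nesterenko's linear independence criterion, and optimise over $r$. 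The form $S_n$ would arise from a ``very-well-poised'' rational function $t\mapsto R_n(q^t)$, a $q$-analogue of the one used by Rivoal and Zudilin for $\beta$, built from factors $1-q^{at+b}$ (including a distinguished factor odd under the reflection below), with the degrees of numerator and denominator, and the exponent $r$, chosen so that: (i) the series $\sum_{t\geq1}R_n(q^t)$ converges geometrically and equals $S_n$; and (ii) the summand is invariant, up to sign, under a reflection $q^t\mapsto q^{-t-c_n}$ --- the $q$-analogue of the parity trick of Ball--Rivoal --- so that half the residues in the partial fraction expansion of $R_n$ in $q^t$ vanish and, after termwise summation, no odd value $\beta_q(2j+1)$ survives. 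The parameter $r$ governs the multiplicities in $R_n$, hence the balance between the size of $S_n$ and that of the $p_{i,n}$.

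\medskip
\noindent Write $b:=1/q\in\Z$ and $B:=|b|\geq2$. The arithmetic core is the statement that, although $p_{i,n}(q)$ is a priori a rational function with denominator dividing a power of $\prod_{j\leq cn}(1-q^j)$, it in fact has denominator dividing a power of the $q$-analogue of the least common multiple, $\Delta_N(q):=\mathrm{lcm}\!\left(1-q,1-q^2,\dots,1-q^N\right)=\prod_{d\leq N}\Phi_d(q)$, whose degree equals $\sum_{d\leq N}\varphi(d)\sim\frac{3}{\pi^2}N^2$; this is exactly where the constants $\frac{48}{\pi^2}$ and $\frac{16}{\pi^2}$ in $f(r;A)$ come from, and is the $q$-analogue of using $\mathrm{lcm}(1,\dots,N)$ in place of $N!$. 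Multiplying $S_n$ by an appropriate power of $\Delta_N(q)$ and specialising $q=1/b$ produces a nonzero integer linear form $L_n$ in $1,\beta_{1/b}(2),\dots,\beta_{1/b}(A-1)$ whose coefficients have size $B^{\,u(r,A)n^2+o(n^2)}$, with $u(r,A)$ equal, after normalisation, to the denominator of $f(r;A)$. On the analytic side a direct majorisation of the geometric series $S_n$ (or a saddle point estimate) gives $|L_n|=B^{-v(r,A)n^2+o(n^2)}$ with $v(r,A)<u(r,A)$ and, for $A$ large, $v(r,A)>0$, so that $L_n\to0$; the non-vanishing of $L_n$ for large $n$ follows from the precise asymptotics of $S_n$ (a ratio of two quantities with distinct $q$-exponential rates), or from a $q$-product formula for a well-chosen coefficient.

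\medskip
\noindent Nesterenko's linear independence criterion, applied to $1,\beta_q(2),\dots,\beta_q(A-1)$ with the forms $L_n$, then yields
\[
\dim_{\Q}\!\left(\Q+\Q\beta_q(2)+\dots+\Q\beta_q(A-1)\right)\;\geq\;1+\frac{v(r,A)}{u(r,A)}=\frac{u(r,A)+v(r,A)}{u(r,A)},
\]
and bookkeeping of the exponents identifies $u(r,A)$ with the denominator and $u(r,A)+v(r,A)$ with the numerator of $f(r;A)$ (when $v(r,A)\leq0$, i.e. $f(r;A)\leq1$, \eqref{efminor} is trivially true); taking the maximum over $1\leq r<A/2$ gives \eqref{efminor}. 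For the asymptotics, as $A\to+\infty$ one has $f(r;A)\sim\dfrac{4rA}{\left(\frac{48}{\pi^2}+2\right)A+8r^2}$; writing $r\sim c\sqrt A$ and maximising $g(c):=\dfrac{4c}{\frac{48}{\pi^2}+2+8c^2}$ gives $8c^2=\frac{48}{\pi^2}+2$, i.e. $c=\dfrac{\sqrt{\pi^2+24}}{2\pi}$, and $g(c)=\dfrac{1}{4c}$, whence $f(A)\sim\dfrac{1}{4c}\sqrt A=\dfrac{\pi}{2\sqrt{\pi^2+24}}\sqrt A$.

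\medskip
\noindent The main obstacle is the interplay of the construction with the arithmetic estimate: the very-well-poised $q$-series must be arranged so that the odd values $\beta_q(2j+1)$ cancel \emph{and} so that the common denominator is governed by $\Delta_N(q)$ rather than by the full $q$-factorial, after which one still has to extract the sharp exponents $u(r,A)$ and $v(r,A)$ --- in particular the exact coefficients of $r$ and of $r^2$ --- which is where essentially all of the computation lies. The analytic estimate and the invocation of Nesterenko's criterion are, by comparison, routine.
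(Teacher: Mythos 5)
Your overall strategy is exactly the paper's: a very-well-poised basic hypergeometric series $S_n$ with a reflection symmetry $S_n(1/q)=q^{n(r-1)}S_n(q)$ that kills the odd values $\beta_q(2j+1)$ (this forces $n$ odd, a restriction you omit but which is essential for the partial-fraction parity argument), cyclotomic-polynomial denominators, the three asymptotic rates, Nesterenko's criterion, and optimisation over $r$. Your closing computation is correct and complete: writing $r\sim c\sqrt A$, maximising $4c/(\tfrac{48}{\pi^2}+2+8c^2)$ gives $8c^2=\tfrac{48}{\pi^2}+2$ and the value $\tfrac{1}{4c}=\tfrac{\pi}{2\sqrt{\pi^2+24}}$, matching the stated asymptotic; and your identification of the numerator and denominator of $f(r;A)$ with $\alpha_1+\alpha_2$ and $\alpha_2$ is consistent with the paper's Lemmas 5.1, 5.2 and 5.6.

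However, as a proof the proposal has a genuine gap in the arithmetic step, beyond merely deferring computation. You assert that the coefficients' denominator divides a power of the $q$-lcm $\prod_{d\leq N}\Phi_d(q)$, of degree $\sim\frac{3}{\pi^2}N^2$, and that ``this is exactly where the constants $\frac{48}{\pi^2}$ and $\frac{16}{\pi^2}$ come from.'' But no power of that $q$-lcm can produce the term $\frac{16r}{3}$ in the denominator of $f(r;A)$: in the paper that term comes from an \emph{additional} factor $\varphi_n(1/q)^{2r}$ with $\varphi_n(x)=\prod_{k=1}^{n}\phi_{2k}(x)^{\lfloor n/k\rfloor}$, of growth $\frac{2}{3}n^2\log|1/q|$ per copy, which is needed (Lemma~4.1) to clear the denominators created by the shifted products $(q^{k-rn},q^{k+n})_{rn}$ in the numerator of $R_n$ --- these contribute quotients $\prod_i(1-x^{e+2i})/(1-x^{2i})$ with $e$ odd that are not polynomials. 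Likewise the constant term $-\frac{16}{\pi^2}$ reflects the separate factor $\Delta_n(1/q)=\prod_{t\leq 2n-1,\ t\ \mathrm{odd}}\phi_t(1/q)$ (rate $\frac{8}{\pi^2}$) attached only to $\hat P_{0,n}$, not a uniform power of the full lcm. Without this finer denominator analysis your scheme would yield a lower bound of the same shape but with a strictly worse $u(r,A)$, hence not the stated $f(r;A)$; the rest of the argument (sizes of $S_n$ and of the coefficients via Cauchy's formula, and the application of Nesterenko's criterion to the $(A+1)/2$ integer sequences $D_n(q)\hat P_{j,n}(q^2)$ for odd $n$) is as you describe.
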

The previous asymptotic estimate for $f(A)$ gives immediately the following:
\begin{coro}
For $1/q\in\Z\setminus\{-1;1\}$, there are infinitely many irrational numbers among $\beta_q(2)$, $\beta_q(4)$, $\beta_q(6)$,\dots 
\end{coro}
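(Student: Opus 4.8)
The plan is to derive the Corollary directly from the asymptotic lower bound supplied by the Theorem, via a short contradiction argument; no new idea is needed. First I would record the only consequence of the Theorem that is actually used: since $f(A)\sim\frac{\pi}{2\sqrt{\pi^2+24}}\,\sqrt A$ as $A\to+\infty$, we have $f(A)\to+\infty$, and therefore, by \eqref{efminor}, $\dim_{\Q}\big(\Q+\Q\beta_q(2)+\dots+\Q\beta_q(A-1)\big)\to+\infty$ as $A\to+\infty$ through the odd integers.

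Next I would argue by contradiction. Suppose that only finitely many of the numbers $\beta_q(2),\beta_q(4),\beta_q(6),\dots$ are irrational, and set $S=\{\,s\geq1:\beta_q(2s)\notin\Q\,\}$, which is finite by assumption. For any odd $A\geq3$, every generator $\beta_q(2s)$ with $s\notin S$ and $2s\leq A-1$ lies in $\Q$, so the space $\Q+\Q\beta_q(2)+\dots+\Q\beta_q(A-1)$ is contained in the fixed $\Q$-vector space $\Q+\sum_{s\in S}\Q\beta_q(2s)$, whose dimension is at most $1+|S|$, a bound independent of $A$. This contradicts the divergence of the dimension established above; hence $S$ must be infinite, which is precisely the assertion of the Corollary.

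One small point deserves care: the bound \eqref{efminor} is stated only for odd $A\geq3$, so the limit must be taken along odd values of $A$. This is harmless, because as $A$ increases through the odd integers the spaces $\Q+\Q\beta_q(2)+\dots+\Q\beta_q(A-1)$ are nested, so their dimensions form a nondecreasing sequence, and for the argument it suffices that this sequence be unbounded. The genuine difficulty of the paper lies entirely in the proof of the Theorem itself --- the construction of explicit rational linear forms in $1,\beta_q(2),\dots,\beta_q(A-1)$ with controlled arithmetic denominators and small archimedean size, combined with a linear independence criterion in the spirit of Rivoal and Zudilin --- whereas the Corollary is merely a soft consequence of the asymptotics of $f(A)$ and presents no obstacle.
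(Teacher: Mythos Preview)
Your proof is correct and is essentially the same approach as the paper's: the paper simply states that the Corollary follows immediately from the asymptotic estimate $f(A)\sim\frac{\pi}{2\sqrt{\pi^2+24}}\sqrt A$, and your contradiction argument is precisely the natural unpacking of that word ``immediately.'' The extra care you take about restricting to odd $A$ and noting the nesting of the spaces is accurate but not really needed, since unboundedness of $f(A)$ along odd $A$ already suffices for the contradiction.
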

On the other hand, the estimate $f(3;21)\geq 1,02...$ gives the following quantitative version:
\begin{coro}
For $1/q\in\Z\setminus\{-1;1\}$, at least one of the numbers $\beta_q(2),\beta_q(4)$, $\beta_q(6)$,$\dots,\beta_q(20)$ is irrational.
\end{coro}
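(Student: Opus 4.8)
\medskip
\noindent\textbf{Sketch of the proof.} I would deduce this Corollary directly from the Theorem: taking the admissible pair $(r,A)=(3,21)$ (note $1\le 3<21/2$ and $21$ is odd) gives
$$\dim_{\Q}\bigl(\Q+\Q\beta_q(2)+\Q\beta_q(4)+\dots+\Q\beta_q(20)\bigr)\ \ge\ f(21)\ \ge\ f(3;21)\ >\ 1,$$
so this dimension is an integer $\ge 2$; were all of $\beta_q(2),\beta_q(4),\dots,\beta_q(20)$ rational it would equal $1$, a contradiction. Hence at least one of these numbers is irrational, and the only substantial work is the proof of the Theorem, which I sketch next.

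For the Theorem I would follow the by now classical scheme combining an explicit construction of small linear forms with Nesterenko's linear independence criterion, exactly as for $\zeta_q$ at even integers in \cite{KRZ,JM} and for Dirichlet's beta function in \cite{RZ}, the new ingredient being a construction twisted to match the character $\chi$ modulo~$4$. Write $1/q=p\in\Z\setminus\{-1,1\}$ and fix an integer $r$ with $1\le r<A/2$. To each $n\ge1$ I would attach a very-well-poised basic hypergeometric series
$$S_n(q)=\sum_{k\ge1}R_n(q^k),$$
where $R_n$ is a rational function whose numerator is a product of $q$-shifted factorials to base $q^2$ having $\sim rn$ factors — hence involving powers of $q$ with exponents of order $n^2$, which is the origin of the scale $n^2$ below — and whose denominator is chosen so that the partial fraction expansion of $R_n$ in the variable $x=q^k$ involves only terms of the shape $x/(1+q^{2j}x^2)^m$; the base $q^2$ and a shift $x\mapsto-x$ are precisely what encode the alternation $\chi(2m+1)=(-1)^m$ behind \eqref{betaq}, and the parameters $r$ and $A$ would be adjusted so that exactly $\beta_q(2),\dots,\beta_q(A-1)$ appear. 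The crucial feature is that $R_n$ must be given a symmetry — a $q$-analogue of $x\mapsto q^{c}/x$ — forcing the coefficients of $\beta_q(1),\beta_q(3),\dots$ to vanish identically, just as the symmetry in \cite{RZ} isolates the even values of $\beta$; without it the relation would involve the numbers $\beta_q(2s+1)$, which are transcendental by the discussion above, and would be vacuous. Summing over $k\ge1$ and using \eqref{betaq} then yields
$$S_n(q)=p_{0,n}+p_{1,n}\beta_q(2)+p_{2,n}\beta_q(4)+\dots+p_{(A-1)/2,n}\beta_q(A-1),$$
with all $p_{i,n}$ in $\Q$ precisely because $1/q\in\Z$.

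Two asymptotic estimates on the scale $n^2$ then feed the criterion (this $n^2$ rather than $n$ reflects the ``super-geometric'' decay of $q$-series, and is ultimately why one obtains a power of $A$ and not merely $\log A$). On the \emph{arithmetic} side I would bound a common denominator $D_n$ of the $p_{i,n}$: it is essentially a power of $p$ times a product of cyclotomic polynomials $\Phi_d$ evaluated at $q=1/p$, and its logarithmic growth rate is computed from the classical estimate $\sum_{d\le N}\varphi(d)\sim\frac{3}{\pi^2}N^2$ ($\varphi$ Euler's totient) — which is precisely why $\pi^2$ appears in the constants $48/\pi^2$ and $16/\pi^2$, the $p$-power part contributing the remaining $8r^2$ and the lower-order $16r/3$; together with the size of the $p_{i,n}$ this makes up the denominator of $f(r;A)$. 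On the \emph{analytic} side, since $R_n(q^k)$ involves powers of $q$ with exponents of order $n^2$ and $S_n$ is dominated by its first terms, $\frac1{n^2}\log|S_n|$ tends to a negative limit while $\frac1{n^2}\log\max_i|p_{i,n}|$ tends to a positive one; combining the decay rate of $S_n$ with the size of the coefficients produces the numerator $4rA+A-4r^2$. Inserting $\bigl(-\log|S_n|\bigr)/n^2$ and $\log\bigl(D_n\max_i|p_{i,n}|\bigr)/n^2$ into Nesterenko's criterion gives $\dim_{\Q}\bigl(\Q+\Q\beta_q(2)+\dots+\Q\beta_q(A-1)\bigr)\ge f(r;A)$, and taking the maximum over $r$ with $1\le r<A/2$ gives the bound \eqref{efminor}. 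For the asymptotics I would set $r\sim\lambda\sqrt{A}$, so that $f(r;A)\sim\frac{4\lambda\sqrt{A}}{(48/\pi^2+2)+8\lambda^2}$; this is maximal at $\lambda^2=(48/\pi^2+2)/8$, and substituting gives $f(A)\sim\frac{\pi}{2\sqrt{\pi^2+24}}\sqrt{A}$.

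I expect the main obstacle to be the construction itself: one must produce a single rational function $R_n$ that simultaneously (i) is well-poised enough to carry the symmetry killing all odd-index beta values, (ii) has denominators with small enough cyclotomic content for the arithmetic loss to be outweighed by the analytic gain, and (iii) decays fast enough — three demands pulling against one another, the surviving compromise being exactly the range $1\le r<A/2$. The other delicate point, always present in this circle of ideas, is proving $S_n\ne0$, as the criterion requires: I would establish it by isolating the term of least $q$-adic valuation in the expansion of $S_n$ in powers of $q$ and checking that it does not vanish (equivalently, through a sharp asymptotic of the dominant term). Everything else — the partial-fraction bookkeeping yielding the $p_{i,n}$, the number-theoretic asymptotics for $D_n$, and the one-variable optimization over $r$ — should be routine once the construction and the non-vanishing are in place.
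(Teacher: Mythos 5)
Your deduction of the Corollary is exactly the paper's: it follows from Theorem~1.1 with $(r,A)=(3,21)$, since $f(3;21)\approx 1.028>1$ forces the dimension to be at least $2$, and your accompanying sketch of the Theorem (very-well-poised series with a $q\mapsto 1/q$ symmetry killing the odd-index values, cyclotomic common denominators, Nesterenko's criterion, optimization at $r\sim\lambda\sqrt{A}$) matches the paper's actual argument in all essentials. The only point worth noting is the paper's Remark~1.5, which observes that this particular corollary can also be obtained without the linear independence criterion, using only the integrality and asymptotics of the single linear form for $A=21$, $r=3$.
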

Now we aim to sketch the proof of Theorem~1.1, which will be more detailed later on. We need the following Proposition, which is a special case of Nesterenko's linear independance criterion from \cite{Ne}:
\begin{prop}
Let $N\geq 2$ be an integer, and $v_1,\dots,v_N$ be real numbers.
Assume that there exist $N$ integer sequences $(p_{j,n})_{n\geq0}$
and two real numbers $\alpha_1$ et $\alpha_2$
with $\alpha_2>0$ such that:\\
i)
$\displaystyle\lim_{n\to+\infty}\frac{1}{n^2}\log|p_{1,n}v_1+\dots+p_{N,n}v_N|=-\alpha_1$,\\
ii) for all $j\in\{1,\dots,N\}$, we have
$\displaystyle\limsup_{n\to+\infty}\frac{1}{n^2}\log|p_{j,n}|\leq\alpha_2$.\\
Then the dimension of the $\Q$-vector space spanned by $v_1,\dots,v_N$ satisfies:
$$\dim_{\Q}\left(\Q v_1+\dots+\Q v_N\right)\geq 1+\frac{\alpha_1}{\alpha_2}\cdot$$
\end{prop}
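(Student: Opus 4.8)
This Proposition is a special case of Nesterenko's linear independence criterion \cite{Ne}, so one legitimate option is simply to quote that theorem; I will instead sketch the argument as I would carry it out.

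\textbf{Reduction to $\Q$-independent $v_j$.} Put $r:=\dim_\Q(\Q v_1+\dots+\Q v_N)$ and $L_n:=p_{1,n}v_1+\dots+p_{N,n}v_N$. If all $v_j$ vanish or if $\alpha_1\le 0$, then $1+\alpha_1/\alpha_2\le 1\le r$ and we are done, so assume $\alpha_1>0$; by i) the limit $-\alpha_1$ is finite, so $L_n\to 0$ and $L_n\neq 0$ for $n$ large. Permuting the indices $j$ (which leaves $L_n$ and hypotheses i), ii) unchanged) we may take $v_1,\dots,v_r$ to be a $\Q$-basis; clearing the common denominator $D$ of the coefficients in $v_j=\sum_{i\le r}c_{ji}v_i$ $(j>r)$ gives $DL_n=\sum_{i=1}^r P_{i,n}v_i$ with $P_{i,n}\in\Z$ and $|P_{i,n}|\le C\max_j|p_{j,n}|$, which still satisfies $\limsup_n n^{-2}\log|P_{i,n}|\le\alpha_2$. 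Hence we may assume $N=r$, the $v_i$ are $\Q$-linearly independent, and $\mathbf p_n:=(p_{1,n},\dots,p_{r,n})\in\Z^r\setminus\{0\}$ with $\langle\mathbf p_n,\mathbf v\rangle=L_n\neq 0$; since a nonzero integer multiple of a single $v_i$ has absolute value $\ge|v_i|$, the case $r=1$ is excluded, so $r\ge 2$. It remains to prove $\alpha_1\le(r-1)\alpha_2$.

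\textbf{The determinant estimate.} Fix $\varepsilon>0$; for $n$ large, $e^{-(\alpha_1+\varepsilon)n^2}\le|L_n|\le e^{-(\alpha_1-\varepsilon)n^2}$ and $\|\mathbf p_n\|_\infty\le e^{(\alpha_2+\varepsilon)n^2}$. Suppose that, for arbitrarily large $n$, there are indices $n\le m_1<\dots<m_r\le\kappa n$ with $\mathbf p_{m_1},\dots,\mathbf p_{m_r}$ linearly independent, where $\kappa>1$ may be taken as close to $1$ as desired. Let $M$ be the integer matrix with rows $\mathbf p_{m_1},\dots,\mathbf p_{m_r}$; then $|\det M|\ge 1$ and $M\mathbf v^{\,t}=(L_{m_1},\dots,L_{m_r})^{\,t}$. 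Fixing $j_0$ with $v_{j_0}\neq 0$, Cramer's rule followed by Hadamard's inequality gives
\begin{equation*}
0<|v_{j_0}|\le c_r\sum_{i=1}^{r}|L_{m_i}|\prod_{k\neq i}\|\mathbf p_{m_k}\|_\infty\le e^{\bigl(-(\alpha_1-\varepsilon)+(r-1)(\alpha_2+\varepsilon)\kappa^{2}\bigr)n^{2}+o(n^{2})},
\end{equation*}
for a constant $c_r$ depending only on $r$. Letting $n\to\infty$ forces $-(\alpha_1-\varepsilon)+(r-1)(\alpha_2+\varepsilon)\kappa^{2}\ge 0$, and then $\varepsilon\to 0$, $\kappa\to 1$ yield $\alpha_1\le(r-1)\alpha_2$.

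\textbf{Producing the short window --- the main obstacle.} Everything hinges on the bracketed hypothesis, and it is here that i) must be used as a genuine two-sided estimate. The basic mechanism: if $\mathbf p_{m'}$ and $\mathbf p_m$ ($m'<m$) are $\R$-proportional, write them as integer multiples $\beta\mathbf u$, $\alpha\mathbf u$ of a common primitive $\mathbf u\in\Z^r$; since $\langle\mathbf u,\mathbf v\rangle\neq 0$ one has $L_m/L_{m'}=\alpha/\beta$, so $|\alpha|=|\beta|\,|L_m|/|L_{m'}|\le\|\mathbf p_{m'}\|_\infty\,|L_m|/|L_{m'}|\le e^{(\alpha_1+\alpha_2+2\varepsilon)m'^{2}-(\alpha_1-\varepsilon)m^{2}}$, which is $<1$ --- forcing $\mathbf p_m=\mathbf 0$, absurd --- as soon as $m$ exceeds $m'$ by a fixed factor. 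Thus the $\mathbf p_m$ cannot all be confined to a fixed line over arbitrarily long ranges of scales; by an induction on the dimension one upgrades this to the following dichotomy. Either, for infinitely many $n$, the vectors $\mathbf p_m$ with $m$ in a window $[n,(1+\delta_n)n]$, $\delta_n\to 0$, already span $\R^r$ --- in which case the previous estimate applies with $\kappa\to 1$ and gives $\alpha_1\le(r-1)\alpha_2$; or else, for all large $m$, the $\mathbf p_m$ lie in one fixed proper rational subspace $\Pi$, and then, re-expressing $\mathbf p_m$ in a $\Z$-basis $\mathbf f_1,\dots,\mathbf f_s$ of $\Pi\cap\Z^r$ (with $s=\dim\Pi<r$), one obtains $s$ $\Q$-linearly independent reals $\langle\mathbf f_k,\mathbf v\rangle$ and an approximation sequence of exactly the same shape, whence $\alpha_1\le(s-1)\alpha_2\le(r-1)\alpha_2$ by the inductive hypothesis (the base case $s=1$ being the contradiction noted above). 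Either way $\alpha_1\le(r-1)\alpha_2$, i.e. $\dim_\Q(\Q v_1+\dots+\Q v_N)=r\ge 1+\alpha_1/\alpha_2$. I expect the truly delicate point to be the quantitative realization of this dichotomy --- squeezing the window width down to $1+o(1)$ with uniform constants --- which is precisely the content of \cite{Ne}; the reduction step and the Cramer--Hadamard estimate are routine.
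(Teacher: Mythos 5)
The paper gives no proof of this Proposition at all---it is stated as a special case of Nesterenko's linear independence criterion \cite{Ne}---and your primary justification is the same citation, so your treatment matches the paper's. Your supplementary sketch (the reduction to $\Q$-linearly independent $v_i$, the exclusion of $r=1$, and the Cramer--Hadamard determinant estimate giving $\alpha_1\le(r-1)\alpha_2$) is sound, and you correctly isolate and honestly defer to \cite{Ne} the one genuinely hard step, namely extracting $r$ linearly independent vectors $\mathbf p_m$ from a window $[n,(1+o(1))n]$.
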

\medskip
In order to use Nesterenko's criterion in our context, we shall study the following hypergeometric series (see section~2 for the notations):
\begin{multline*}
S_n(q):=(q)_n^{A-2r}\sum_{k\geq
1}(-1)^{k+1}q^{(k-1/2)((A-2r)n/2+A/2-1)}\\
\times(1-q^{2k+n-1})\frac{(q^{k-rn},q^{k+n})_{rn}}{(q^{k-1/2})_{n+1}^{A}},
\end{multline*}
where $A$ is an integer, $r\in\N^*$ and $A-2r>0$. In a first step, we rewrite $S_n(q^2)$ as a linear combination of some $\beta_q(2m)$, $m\in\N^*$:
$$S_n(q^2)=\hat{P}_{0,n}(q^2)+\sum_{{j=2\atop j\,\mbox{\scriptsize
{even}}}}^{A-1}\hat{P}_{j,n}(q^2)\beta_q(j),$$ where $|q|<1$, $A$ and $n$ are \emph{odd} positive integers and $\hat{P}_{j,n}(q^2)$ are \emph{a priori} in $\Q(q)$,
i.e. rational fractions in the variable $q$ (thus in the variable $1/q$) with coefficients in $\Q$. In a second step, we look for a common denominator $D_n(q)$ to these rational fractions in the variable $1/q$, satisfying:
$$D_n(q)\hat{P}_{j,n}(q^2)\in\Z\left[\frac{1}{q}\right]\;\;\;\forall
j\in\{0,2,4,\dots,A-1\}.$$
Next, we shall prove the following asymptotic estimates which are true for all $0<|q|<1$:
\begin{eqnarray*}
\lim_{n\to+\infty}\frac{1}{n^2}\log|S_n(q)|&=&-\frac{1}{2}r(A-2r)\log|1/q|,\\
\limsup_{{n\to+\infty\atop n\,\mbox{\scriptsize{odd}}}}\frac{1}{n^2}\log|\hat{P}_{j,n}(q)|&\leq&
\frac{1}{8}(A+4r^2)\log|1/q|,\\
\lim_{n\to+\infty}\frac{1}{n^2}\log|D_n(q)|&=&\left(\frac{A}{4}+r^2+\frac{12}{\pi^2}(A-1)+\frac{4r}{3}+\frac{8}{\pi^2}\right)\log|1/q|.
\end{eqnarray*}
Assume $1/q\in\Z\setminus\{-1;1\}$. Then we apply Proposition~1.4 to the $N=(A+1)/2$ integer sequences  $(D_n(q)\times \hat{P}_{j,n}(q^2))_{n\,\mbox{\scriptsize{odd}}}$, which yields \eqref{efminor}.
   
\vspace{0.2cm}
\noindent The estimate $f(A)\sim\pi\sqrt A/2\sqrt{\pi^2+24}$ for $A\to+\infty$, is obtained by choosing $r=u\sqrt{A}$ and finding the maximal value of $f(u\sqrt{A};A)/\sqrt{A}$ in the variable $u$.\\
\begin{rem}
\emph{Corollary~1.3 can be proved direcly without Nesterenko's criterion. Indeed, for $1/q\in\Z\setminus\{-1;1\}$, it is enough to obtain an asymptotic estimate of the linear combination $D_n(q)\times S_n(q^2)$ in $\beta_q(2)$, $\beta_q(4)$, $\dots,\beta_q(20)$ with integer coefficients (by choosing $A=21$ and $r=3$). Therefore it is not necessary to find an upper bound for the height of the coefficients of the linear form, this is only useful for linear independance.}
\end{rem}
\medskip
This article is organized as follows. In section~2 we recall a few notations on $q$-series, which will be useful later. The third section is concerned with the study of the series $S_n(q)$ mentioned before. In particular, the structure of this series (it is a \emph{very-well-poised} basic hypergeometric series) yields interesting properties. In particular, when $n$ is an odd integer, an appropriate expansion of $S_n(q^2)$ will give explicitely the already mentioned linear combination and its coefficients $\hat{P}_{j,n}(q^2)$. In the fourth section we find a common denominator $D_n(q)$ to the coefficients $\hat{P}_{j,n}(q^2)$, by using arithmetical techniques and cyclotomic polynomials. In section~5, we study the asymptotics of $S_n(q)$, $\hat{P}_{j,n}(q)$ through Cauchy's formula, and $D_n(q)$ by using the M\"obius inversion. Finally, in the sixth and last section, we establish some links between $\beta_q$ and $\beta$, and we end by stating a $q$-denominators Conjecture.

\section{Notations}

We recall some standard definitions and notations for $q$-series, which can be found in \cite{GR}. 

\medskip
Let $q$ be a fixed complex parameter (the ``base'') with $|q|\neq 1$. We define for any real number $a$ and any $k\in\N$ the \emph{$q$-shifted factorial} by:
\begin{equation*}
(a)_k\equiv
(a;q)_k:=\left\{\begin{array}{l}1\;\;\mbox{if}\;\;k=0\\(1-a)\dots
(1-aq^{k-1})\;\;\mbox{if}\;\;k>0.
\end{array}\right.
\end{equation*}
The base $q$ can be omitted when there is no confusion (writing $(a)_k$ for $(a;q)_k$, etc). For the sake of simplicity, write for $k\in\N$:
\begin{equation*}
(a_1,\ldots,a_m)_k:=(a_1)_k\times\cdots\times(a_m)_k.
\end{equation*}
\medskip
Recall the definition of the \emph{$q$-binomial coefficient}:
$$\qbi{n}{k}{q}:=\frac{(q)_n}{(q)_k(q)_{n-k}},$$
which is a polynomial in the variable $q$, with integer coefficients (see for example \cite{St}).

\medskip
\noindent Further, recall the definition of the \emph{basic hypergeometric series ${}_{s+1}\phi_s$}:
\begin{equation*}
{}_{s+1}\phi_s\!\left[\begin{matrix}a_0,a_1,\dots,a_s\\
b_1,\dots,b_{s}\end{matrix};q,z\right]:=
\sum_{k=0}^\infty\frac{(a_0,a_1,\dots,a_s)_k}{(q,b_1,\dots,b_{s})_k}z^k,
\end{equation*}
with $a_j\in\C$ for $0\leq j\leq s$, and $b_j\,q^k\neq 1$ for all $k\in\N$ and $1\leq j\leq s$. The series converges for $|q|<1$ and $|z|<1$, or  $|q|>1$ and $|z|<|b_1\dots b_s/a_1\dots a_{s+1}|$, and we say that ${}_{s+1}\phi_s$ is:
\begin{itemize}
\item \emph{well-poised} if $qa_0=a_1b_1=\dots=a_sb_s$ 
\item \emph{very-well-poised} if it is well-poised and $a_1=q\sqrt{a_0}=-a_2$.
\end{itemize}

\section{A very-well-poised series}

We consider the following basic hypergeometric series:
\begin{multline}\label{Sn}
S_n(q):=(q)_n^{A-2r}\sum_{k\geq
1}(-1)^{k+1}q^{(k-1/2)((A-2r)n/2+A/2-1)}\\
\times(1-q^{2k+n-1})\frac{(q^{k-rn},q^{k+n})_{rn}}{(q^{k-1/2})_{n+1}^{A}},
\end{multline}
for any odd integer $A$, $r\in\N^*$ and $A-2r>0$. Note that this series converges for all $|q|\neq1$. The series $S_n(q)$ satisfies:
\begin{equation}\label{tbe}
S_n(1/q)=q^{n(r-1)}S_n(q),
\end{equation}
which comes from the choice of the power of $q$ inside the sum of \eqref{Sn}. Thanks to \eqref{tbe}, it will be possible to expand $S_n(q^2)$ as a linear combination over $\Q(q)$ of the values of $\beta_q$ at even positive integers only, although one would expect values at odd positive integers as well. Besides, notice that:
\begin{multline*}
S_n(q)=(-1)^{rn}q^{(rn+1/2)((A-2r)n/2+A/2-1)}\\
\times(1-q^{(2r+1)n+1})(q)_n^{A-2r}\frac{(q,q^{(r+1)n+1})_{rn}}{(q^{rn+1/2})_{n+1}^{A}}\\
\times{}_{A+4}\phi_{A+3}\!\left[\begin{matrix}a,q\sqrt a,-q\sqrt
a,q^{rn+1},q^{rn+1/2}\dots,q^{rn+1/2}\\
\sqrt a,-\sqrt
a,q^{(r+1)n+1},q^{(r+1)n+3/2}\dots,q^{(r+1)n+3/2}\end{matrix};q,z\right],
\end{multline*}
with $a=q^{(2r+1)n+1}$ and $z=-q^{(A-2r)n/2+A/2-1}$. This shows that $S_n(q)$ is a very-well-poised basic hypergeometric series. 

\subsection{Some auxiliary functions}

For all $|q|\neq1$ and $s\in\N^*$, we consider the functions:
\begin{equation}\label{Ys}
Y_s(q):=\sum_{k\geq0}(-1)^{k}\frac{q^{2k+1}}{(1-q^{2k+1})^s}\cdot
\end{equation}
We will need the \emph{signless Stirling numbers of the first kind} (see \cite{St}), which are integers denoted by $c(s,j)$ (where $s$ and $j$ are two integers such that $1\leq j\leq s$) and that are defined by:
$$(x)_s:=x(x+1)\dots(x+s-1)=\sum_{j=1}^sc(s,j)x^j.$$
The following result gives for $s\geq2$ the expansion of the functions $Y_s$ in terms of values of $\beta_q$ at positive integers: 
\begin{lem}
For all $|q|<1$ and any integer $s\geq2$:
\begin{equation}\label{Ysbetaq}
Y_s(q)=\frac{1}{(s-1)!}\sum_{j=2}^sc(s-1,j-1)\beta_q(j).
\end{equation}
\end{lem}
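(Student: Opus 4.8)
The plan is to expand every summand of $Y_s(q)$ into a power series in $q$ and to recognise the outcome as the announced combination of the $\beta_q(j)$. First I would recall the elementary generating function $(1-x)^{-s}=\sum_{d\ge 0}\binom{d+s-1}{s-1}x^d$, which gives $x/(1-x)^s=\sum_{d\ge 1}\binom{d+s-2}{s-1}x^d$, and then rewrite $\binom{d+s-2}{s-1}=\frac{1}{(s-1)!}\,d(d+1)\cdots(d+s-2)=\frac{1}{(s-1)!}\sum_{j=1}^{s-1}c(s-1,j)\,d^j$, using precisely the defining relation of the signless Stirling numbers $c(s-1,j)$ recalled just above the statement. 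Substituting $x=q^{2k+1}$ in \eqref{Ys} then turns $Y_s(q)$ into the triple sum $\frac{1}{(s-1)!}\sum_{k\ge 0}(-1)^k\sum_{d\ge 1}\sum_{j=1}^{s-1}c(s-1,j)\,d^j\,q^{(2k+1)d}$.

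Next I would interchange the orders of summation. Since the $j$-sum is finite, the only substantive point is swapping $\sum_k$ and $\sum_d$, which is licit for $|q|<1$ by absolute convergence (one checks $\sum_{k,d}|q|^{(2k+1)d}<\infty$). After the swap, the inner sum over $k$ is the geometric series $\sum_{k\ge 0}(-1)^k q^{(2k+1)d}=q^d/(1+q^{2d})$, so $Y_s(q)=\frac{1}{(s-1)!}\sum_{j=1}^{s-1}c(s-1,j)\sum_{d\ge 1}d^j\,\frac{q^d}{1+q^{2d}}$. By the first expression in \eqref{betaq}, the inner $d$-sum is exactly $\beta_q(j+1)$.

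Finally, the substitution $j\mapsto j-1$ rewrites this as $Y_s(q)=\frac{1}{(s-1)!}\sum_{j=2}^{s}c(s-1,j-1)\beta_q(j)$, which is \eqref{Ysbetaq}; note that the hypothesis $s\ge 2$ is exactly what makes the index range $1\le j\le s-1$ nonempty. I do not expect a genuine obstacle here: the proof is a direct computation, and the only points requiring a line of care are the justification of the interchange of the $k$- and $d$-summations for $|q|<1$, and keeping track of the index shift so that the Stirling coefficient appearing against $\beta_q(j)$ is $c(s-1,j-1)$ and not $c(s-1,j)$.
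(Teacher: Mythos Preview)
Your proof is correct and follows essentially the same route as the paper: expand $q^{2k+1}/(1-q^{2k+1})^s$ as $\frac{1}{(s-1)!}\sum_{l\ge 1}(l)_{s-1}q^{(2k+1)l}$, insert the defining relation of the $c(s-1,\cdot)$, interchange summations, and recognise $\sum_{k\ge 0}(-1)^k q^{(2k+1)l}=q^l/(1+q^{2l})$ together with \eqref{betaq}. The only cosmetic differences are that you write the binomial coefficient explicitly before passing to the rising factorial, you justify the interchange of the $k$- and $d$-sums (which the paper leaves implicit), and you perform the index shift $j\mapsto j-1$ at the end rather than when invoking the Stirling numbers.
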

\begin{proof}
The definition \eqref{Ys} can be expanded as follows, for $|q|<1$:
\begin{eqnarray}
Y_s(q)&=&\frac{1}{(s-1)!}\sum_{k\geq0}\sum_{l\geq1}(-1)^{k}(l)_{s-1}q^{(2k+1)l}\label{ys(q)}\\
&=&\frac{1}{(s-1)!}\sum_{k\geq0}\sum_{l\geq1}(-1)^{k}\sum_{j=2}^sc(s-1,j-1)l^{j-1}q^{(2k+1)l}\nonumber\\
&=&\frac{1}{(s-1)!}\sum_{j=2}^sc(s-1,j-1)\sum_{l\geq1}\frac{q^l}{1+q^{2l}},\nonumber
\end{eqnarray}
and we conclude by using the definition \eqref{betaq}.
\end{proof}
The following Lemma gives for $s\geq2$ the expansion of the functions $Y_s$ in terms of values of $\beta_q$ at \emph{even} positive integers only: 
\begin{lem}
For all $0<|q|<1$ and any integer $s\geq2$:
\begin{equation}\label{Ysbetaqpair}
Y_s(q)+Y_s(1/q)=\frac{2}{(s-1)!}\sum_{{j=2\atop j\mbox{\scriptsize even}}}^sc(s-1,j-1)\beta_q(j).
\end{equation}
\end{lem}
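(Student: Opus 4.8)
The plan is to combine the expansion of $Y_s(q)$ already obtained in \eqref{Ysbetaq} with an analogous expansion of $Y_s(1/q)$, the point being that passing from $q$ to $1/q$ introduces a sign $(-1)^j$ on the $j$-th term, so that the odd-indexed values $\beta_q(j)$ cancel in the sum. First I would note that, since $s\geq 2$, the series \eqref{Ys} converges absolutely for every $|q|\neq1$, so $Y_s(1/q)$ is a well-defined number when $0<|q|<1$. The key manipulation is to rewrite its general term: multiplying numerator and denominator by $q^{s(2k+1)}$ gives
$$\frac{(1/q)^{2k+1}}{\bigl(1-(1/q)^{2k+1}\bigr)^s}=\frac{q^{(s-1)(2k+1)}}{(q^{2k+1}-1)^s}=(-1)^s\,\frac{q^{(s-1)(2k+1)}}{(1-q^{2k+1})^s},$$
hence $\displaystyle Y_s(1/q)=(-1)^s\sum_{k\geq0}(-1)^k\frac{q^{(s-1)(2k+1)}}{(1-q^{2k+1})^s}$.

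Next I would expand this exactly in the spirit of the computation \eqref{ys(q)}. Writing $x=q^{2k+1}$ and using $x^{s-1}/(1-x)^s=\sum_{m\geq1}\binom{m}{s-1}x^m$ (valid for $|x|<1$, the terms with $m\leq s-2$ vanishing because $s\geq2$), together with the elementary identity
$$(s-1)!\binom{m}{s-1}=(-1)^{s-1}(-m)_{s-1}=(-1)^{s-1}\sum_{j=1}^{s-1}c(s-1,j)(-m)^j=\sum_{j=1}^{s-1}(-1)^{s-1+j}c(s-1,j)m^j,$$
one interchanges the absolutely convergent summations over $k$, $m$ and $j$, uses $\sum_{k\geq0}(-1)^kq^{(2k+1)m}=q^m/(1+q^{2m})$, and recognizes via \eqref{betaq} that $\sum_{m\geq1}m^jq^m/(1+q^{2m})=\beta_q(j+1)$. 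This yields
$$Y_s(1/q)=\frac{(-1)^s}{(s-1)!}\sum_{j=1}^{s-1}(-1)^{s-1+j}c(s-1,j)\,\beta_q(j+1)=\frac{1}{(s-1)!}\sum_{j=1}^{s-1}(-1)^{j+1}c(s-1,j)\,\beta_q(j+1),$$
using $(-1)^s(-1)^{s-1+j}=(-1)^{j+1}$.

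Finally, reindexing $j\mapsto j-1$ turns this into $Y_s(1/q)=\frac{1}{(s-1)!}\sum_{j=2}^{s}(-1)^{j}c(s-1,j-1)\beta_q(j)$; adding \eqref{Ysbetaq} and noting that $1+(-1)^j$ equals $2$ when $j$ is even and $0$ when $j$ is odd gives \eqref{Ysbetaqpair}. The only delicate point I expect is the sign bookkeeping — in particular correctly converting the falling factorial $m(m-1)\cdots(m-s+2)$ hidden in $\binom{m}{s-1}$ into the rising factorial $(x)_{s-1}$ through which the $c(s-1,j)$ are defined, and tracking the factor $(-1)^s$ coming from the denominator; the interchanges of summation are routine, all the series involved being absolutely convergent for $0<|q|<1$.
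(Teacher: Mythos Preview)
Your proof is correct and follows essentially the same route as the paper: both compute $Y_s(1/q)=(-1)^s\sum_{k\geq0}(-1)^k q^{(s-1)(2k+1)}/(1-q^{2k+1})^s$, expand the resulting series in powers of $q^{2k+1}$, rewrite the coefficients via the signless Stirling numbers $c(s-1,j)$, and observe that adding \eqref{Ysbetaq} produces the factor $1+(-1)^j$ that kills the odd-indexed $\beta_q(j)$. The only cosmetic difference is that the paper expands $x/(1-x)^s$ and then shifts the summation index (obtaining the rising factorial $(l-s+2)_{s-1}$, which it identifies with $-(-1)^s(-l)_{s-1}$), whereas you expand $x^{s-1}/(1-x)^s$ directly via $\binom{m}{s-1}$ and convert the falling factorial to $(-1)^{s-1}(-m)_{s-1}$; the sign bookkeeping you flagged as delicate is handled correctly.
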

\begin{proof}
Set $0<|q|<1$. Starting from the definition \eqref{Ys}, we get:
\begin{eqnarray}
Y_s(1/q)&=&(-1)^s\sum_{k\geq0}(-1)^{k}\frac{q^{(2k+1)(s-1)}}{(1-q^{2k+1})^s}\nonumber\\
&=&(-1)^s\sum_{k\geq0}\sum_{l\geq1}(-1)^{k}q^{(2k+1)(s-2)}\frac{(l)_{s-1}}{(s-1)!}q^{(2k+1)l}\nonumber\\
&=&\frac{(-1)^s}{(s-1)!}\sum_{k\geq0}\sum_{l\geq1}(-1)^{k}(l-s+2)_{s-1}q^{(2k+1)l}\label{ys(1/q)}.
\end{eqnarray}
Then we collect expressions \eqref{ys(q)} and \eqref{ys(1/q)}, invert summations, and sum over $k$:
\begin{eqnarray*}
Y_s(q)+Y_s(1/q)&=&\frac{1}{(s-1)!}\sum_{l\geq1}\left[(l)_{s-1}-(-l)_{s-1}\right]\frac{q^l}{1+q^{2l}}\\
&=&\frac{1}{(s-1)!}\sum_{j=2}^sc(s-1,j-1)(1+(-1)^j)\sum_{l\geq1}l^{j-1}\frac{q^l}{1+q^{2l}}\cdot
\end{eqnarray*}
This shows \eqref{Ysbetaqpair} via the definition \eqref{betaq} of $\beta_q$.
\end{proof}

\subsection{Linear combination in the $\beta_q(2j)$, $j\in\N^*$}

Define:
\begin{multline}\label{Rn}
R_n(T;q):=T^{(A-2r)n/2+A/2-2}q^{-A(n^2-1)/2-((A-2r)n+A-2)/4}\\
\times\frac{(q)_n^{A-2r}(q^{-rn}T,q^{n}T)_{rn}}{(T-q^{1/2})^{A}(T-q^{1/2-1})^{A}\dots (T-q^{1/2-n})^{A}}\cdot
\end{multline}
Then obviously we have:
$$S_n(q)=\sum_{k\geq 1}(-1)^{k+1}q^{k}(1-q^{2k+n-1})R_n(q^k;q).$$
Notice that when $n$ is odd, $R_n(T;q)$ is a rational fraction in the variable $T$ with degree $-n(A-2r)/2-A/2-2$, which is less or equal to $-3$, as $A>2r\geq2$. Assume from now on that $n$ is a fixed odd positive integer. The partial fraction expansion of $R_n(T;q)$ can be written:
$$
R_n(T;q)=\sum_{s=1}^{A}\sum_{j=0}^n\frac{c_{s,j,n}(q)}{(T-q^{1/2-j})^s}=\sum_{s=1}^{A}\sum_{j=0}^n\frac{d_{s,j,n}(q)}{(1-Tq^{j-1/2})^s},
$$
where 
\begin{equation}\label{Alpha}
d_{s,j,n}(q):=(-1)^sq^{(j-1/2)s}c_{s,j,n}(q)
\end{equation}
and
\begin{eqnarray}
c_{s,j,n}(q)&:=&\frac{1}{(A-s)!}\left[\frac{d^{A-s}}{dT^{A-s}}R_n(T;q)(T-q^{1/2-j})^{A}\right]_{T=q^{1/2-j}}\label{c}\\
&=&\frac{q^{(1/2-j)s}}{(A-s)!}\left[\frac{d^{A-s}}{du^{A-s}}R_n(uq^{1/2-j};q)(u-1)^{A}\right]_{u=1}.\label{c2}
\end{eqnarray}

\medskip
\noindent The definition \eqref{Rn} gives
$R_n(Tq^{n-1};1/q)=-q^{n(r-2)+1}R_n(T;q)$, which yields for all $j\in\{0,\dots,n\}$ and $s\in\{1,\dots,A\}$:
\begin{equation}\label{relationdj}
d_{s,n-j,n}(1/q)=-q^{n(r-2)+1}d_{s,j,n}(q),
\end{equation}
or equivalently
\begin{equation}\label{relationcj}
c_{s,n-j,n}(1/q)=-q^{n(s+r-2)+1-s}c_{s,j,n}(q).
\end{equation}

We can now prove the following Lemma, which gives explicitely the expected linear combination in terms of the values of $\beta_q$ at even positive integers:
\begin{lem}
For all $0<|q|<1$, any odd positive integers $n$ and $A$, and $r\in\N^*$ such that $A>2r$:
\begin{equation}\label{formelineaire}
S_n(q^2)=\hat{P}_{0,n}(q^2)+\sum_{{j=2\atop
j\,\mbox{\scriptsize {even}}}}^{A-1}\hat{P}_{j,n}(q^2)\beta_q(j),
\end{equation}
where for $j=2,4,\dots,A-1$,
\begin{eqnarray}
\hat{P}_{0,n}(q)&:=&P_{0,n}(1,q)+q^{-n(r-1)}P_{0,n}(1,1/q)-\frac{1}{2}P_{1,n}(1,q)\label{P0chap},\\
\hat{P}_{j,n}(q)&:=&\sum_{s=j}^{A}\frac{2c(s-1,j-1)}{(s-1)!}P_{s,n}(1,q)\label{Pjchap},
\end{eqnarray}
and
\begin{eqnarray}
P_{0,n}(z,q)&:=&\sum_{s=1}^{A}\sum_{j=1}^n\sum_{k=1}^j(-1)^{j+k}\frac{q^{k-j}}{(1-q^{k-1/2})^s}\,d_{s,j,n}(q)z^{j-k}\label{P0},\\
P_{s,n}(z,q)&:=&\sum_{j=0}^n(-1)^jq^{1/2-j}d_{s,j,n}(q)z^j\label{Pj}.
\end{eqnarray}
\end{lem}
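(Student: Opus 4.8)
The plan is to compute the sum $\sum_{k\geq1}(-1)^{k+1}q^k(1-q^{2k+n-1})R_n(q^k;q)$ directly from the partial fraction expansion of $R_n(T;q)$, substituting $q\mapsto q^2$ at the very end, and then to symmetrize using the functional equation \eqref{tbe}. First I would plug the partial fraction decomposition $R_n(T;q)=\sum_{s=1}^{A}\sum_{j=0}^n d_{s,j,n}(q)/(1-Tq^{j-1/2})^s$ into the expression $S_n(q)=\sum_{k\geq1}(-1)^{k+1}q^k(1-q^{2k+n-1})R_n(q^k;q)$, obtaining a double sum over $j,s$ of terms $d_{s,j,n}(q)\sum_{k\geq1}(-1)^{k+1}q^k(1-q^{2k+n-1})/(1-q^{k+j-1/2})^s$. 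The factor $(1-q^{2k+n-1})$ should be split so that the inner sum over $k$ reorganizes: shifting the summation index $k\mapsto k-j$ in one piece and $k\mapsto k-j$ again (with the $q^{2k+n-1}$ piece absorbing a power shift) will produce, after separating the finitely many ``boundary'' terms $1\leq k\leq j$, a tail that is exactly $\pm Y_s(q)$ up to an explicit monomial prefactor. This is where the auxiliary functions $Y_s(q)=\sum_{k\geq0}(-1)^k q^{2k+1}/(1-q^{2k+1})^s$ enter: the infinite tails collapse to $Y_s$, and the finite boundary terms assemble into the polynomial pieces $P_{0,n}(z,q)$ of \eqref{P0}, while the coefficient-extracting sums $\sum_j (-1)^j q^{1/2-j} d_{s,j,n}(q) z^j$ become $P_{s,n}(z,q)$ of \eqref{Pj}.

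Carrying this out naively would express $S_n(q)$ as $\sum_s P_{s,n}(1,q)Y_s(q)$ plus a $P_{0,n}(1,q)$-type term, which by Lemma 3.3 (the expansion \eqref{Ysbetaq}) is a linear combination of \emph{all} $\beta_q(j)$, $2\leq j\leq A$, not just the even ones. To kill the odd values I would exploit the relation \eqref{tbe}, $S_n(1/q)=q^{n(r-1)}S_n(q)$, together with the companion relations \eqref{relationdj}–\eqref{relationcj} for the partial-fraction coefficients: replacing $q$ by $1/q$ in the raw formula for $S_n$ and using \eqref{relationdj} to rewrite $d_{s,n-j,n}(1/q)$ in terms of $d_{s,j,n}(q)$, I get a second expression for $S_n(q)$ involving $Y_s(1/q)$ and the ``reflected'' polynomials $P_{0,n}(1,1/q)$. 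Averaging the two expressions (i.e. writing $S_n(q)=\tfrac12\big(S_n(q)+q^{-n(r-1)}S_n(1/q)\big)$, which is legitimate by \eqref{tbe}) replaces each $Y_s(q)$ by $\tfrac12(Y_s(q)+Y_s(1/q))$, and now Lemma 3.4 (the expansion \eqref{Ysbetaqpair}) shows these symmetric combinations involve only $\beta_q(j)$ with $j$ even. Collecting coefficients of $\beta_q(j)$ for each even $j$ gives $\hat P_{j,n}$ as in \eqref{Pjchap} — the factor $2c(s-1,j-1)/(s-1)!$ coming straight from \eqref{Ysbetaqpair} — and collecting the remaining constant terms gives $\hat P_{0,n}$ as in \eqref{P0chap}, where the two $P_{0,n}(1,\cdot)$ terms come from the two (un-averaged and reflected) contributions and the $-\tfrac12 P_{1,n}(1,q)$ term comes from the $j$-independent ``$Y_1$-like'' constant that \eqref{Ysbetaqpair} does not cover (the $s=1$ piece, where the $\beta_q$-expansion degenerates). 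Finally I substitute $q\mapsto q^2$ throughout, which is harmless since the derivation only used $0<|q|<1$ and the identities hold at $q^2$; this produces \eqref{formelineaire} with arguments $\hat P_{j,n}(q^2)$ as stated.

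The main obstacle will be the bookkeeping in the index-shifting step: correctly isolating the finite boundary sums $\sum_{k=1}^{j}$ from the infinite tails, tracking the sign $(-1)^{j+k}$ and the power $q^{k-j}$ that emerge from the shift, and making sure the $(1-q^{2k+n-1})$ factor is distributed so that both resulting geometric-type tails telescope onto the \emph{same} function $Y_s$ rather than two different ones (this is precisely the point where the specific exponent $(k-1/2)((A-2r)n/2+A/2-1)$ in \eqref{Sn}, engineered to give \eqref{tbe}, makes the symmetrization work). A secondary subtlety is the $s=1$ term: since $c(0,0)=1$ but the sum in \eqref{Ysbetaq} starts at $j=2$, the function $Y_1(q)$ is \emph{not} a combination of $\beta_q(j)$'s at all (indeed $Y_1(q)=\beta_q(1)$), so one must check that after symmetrization the $Y_1(q)+Y_1(1/q)$ contribution is purely rational in $q$ — which it is, because \eqref{Ysbetaqpair} with $s=1$ reads $Y_1(q)+Y_1(1/q)=0$ once one notes $(l)_0-(-l)_0=0$; the leftover $-\tfrac12 P_{1,n}(1,q)$ in \eqref{P0chap} is exactly the rational remnant of handling this degenerate case separately. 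Everything else — convergence for $0<|q|<1$, interchanging the finite sum over $s,j$ with the sum over $k$, and the fact that the $\hat P_{j,n}$ land in $\Q(q)$ since the $d_{s,j,n}(q)$ are rational fractions with rational coefficients by \eqref{c}–\eqref{c2} — is routine.
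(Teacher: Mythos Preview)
Your overall strategy---partial fractions, symmetrize in $q\leftrightarrow 1/q$, then invoke Lemma~3.2 to kill the odd $\beta_q(j)$---is exactly the paper's. But your handling of the $s=1$ piece has a genuine gap. The series $Y_1(1/q)$ (equivalently $L_1(1;1/q)$) is \emph{divergent} for $0<|q|<1$: its $k$-th term equals $(-1)^{k+1}/(q^{k-1/2}-1)\to(-1)^{k}$, so the terms do not tend to~$0$. Hence your assertion ``$Y_1(q)+Y_1(1/q)=0$ once one notes $(l)_0-(-l)_0=0$'' is purely formal and not a legitimate evaluation; Lemma~3.2 is stated only for $s\ge 2$ precisely because the $s=1$ case does not converge. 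This is exactly why the paper introduces the auxiliary variable $z$ and works with ${\cal S}_n(z;q)$ and $L_s(z;q)$ on the open region $|q|<|z|<1$: there one has the honest identity $L_1(1/z;1/q)=-L_1(1/z;q)-z/(1+z)$, so that $P_{1,n}(z,q)\bigl(L_1(z;q)+L_1(1/z;1/q)\bigr)\to -\tfrac12 P_{1,n}(1,q)$ as $z\to 1^-$. The $-\tfrac12$ in \eqref{P0chap} is this Abel-type limit (morally $\sum_{k\ge 1}(-1)^k=-\tfrac12$), not a consequence of any cancellation in the Stirling expansion. Without the $z$-regularization (or an equivalent device) your argument does not produce that term.

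A secondary point: you phrase the symmetrization as ``averaging'' $S_n(q)=\tfrac12\bigl(S_n(q)+q^{-n(r-1)}S_n(1/q)\bigr)$ via \eqref{tbe}, whereas the paper instead uses the very-well-poised factor $(1-q^{2k+n-1})$ to split $S_n(q)$ \emph{directly} as ${\cal S}_n(1;q)+q^{-n(r-1)}{\cal S}_n(1;1/q)$ with no factor $\tfrac12$. Your averaging is of course formally equivalent, but it hides where the coefficients in \eqref{P0chap}--\eqref{Pjchap} come from and, more importantly, it requires you to expand $S_n(1/q)$ itself via partial fractions, which again runs into the divergent $s=1$ term. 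The paper's route---expand ${\cal S}_n(z;q)$ and ${\cal S}_n(1/z;1/q)$ for $|z|<1$, combine, then let $z\to 1$---is not a stylistic choice but the mechanism that makes the $s=1$ contribution well-defined.
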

\begin{proof}
We fix an odd positive integer $n$ and a complex number $q$ such that $0<|q|<1$. Define for complex $z$
$$
{\cal{S}}_n(z;q):=\sum_{k\geq 1}(-1)^{k+1}q^kR_n(q^k;q)z^{-k}.
$$
It is not difficult to see that ${\cal{S}}_n(z;q)$ converges
if $|z|>|q|^{(A-2r)n/2+A/2-1}$ and ${\cal{S}}_n(1/z;1/q)$
converges if $|z|<|q|^{-(A-2r)n/2-A/2}$. Thus both series ${\cal{S}}_n(z;q)$ and ${\cal{S}}_n(1/z;1/q)$ converge if $|q|<|z|\leq1$. A direct calculation shows that:
\begin{equation}\label{**}
{\cal{S}}_n(1;q)+q^{-n(r-1)}{\cal{S}}_n(1;1/q)=S_n(q).
\end{equation}

\medskip
\noindent 
On the other hand, by using the partial fraction expansion of $R_n(T;q)$, we get:
\begin{equation}\label{devcalSn}
{\cal{S}}_n(z;q)=P_{0,n}(z,q)+\sum_{s=1}^{A}P_{s,n}(z,q)L_s(z;q),
\end{equation}
where $P_{0,n}(z,q)$ and $P_{s,n}(z,q)$ are polynomials in the variable
$z$ defined by \eqref{P0} and \eqref{Pj}, and
\begin{equation}\label{defLs}
L_s(z;q):=\sum_{k\geq1}(-1)^{k+1}\frac{q^{k-1/2}}{(1-q^{k-1/2})^s}z^{-k}.
\end{equation}
Coming back to the definition \eqref{Pj}, we see that \eqref{relationdj} implies that for $s\geq1$, $P_{s,n}(1/z,1/q)=z^{-n}q^{n(r-1)}P_{s,n}(z,q)$. This gives us the idea to study the series:
\begin{equation}\label{defcalSntilde}
\tilde{{\cal{S}}}_n(z;q):={\cal{S}}_n(z;q)+z^{n}q^{-n(r-1)}{\cal{S}}_n(1/z;1/q),
\end{equation}
with the convergence condition $|q|<|z|<1$. The expansion \eqref{devcalSn}, and the previous relation between
$P_{s,n}(1/z,1/q)$ and $P_{s,n}(z,q)$, give for $|q|<|z|<1$:
\begin{multline}\label{devcalSntilde}
\tilde{{\cal{S}}}_n(z;q)=P_{0,n}(z,q)+q^{-n(r-1)}z^{n}P_{0,n}(1/z,1/q)\\
+\sum_{s=1}^{A}P_{s,n}(z,q)(L_s(z;q)+L_s(1/z;1/q)).
\end{multline}
Note that we can deduce directly from the definition \eqref{defLs}:
$$L_1(1/z;1/q)=-\sum_{k\geq1}\sum_{l\geq0}(-1)^{k+1}q^{(k-1/2)l}z^{k},$$
which yields $L_1(1/z;1/q)=-L_1(1/z;q)-z/(1+z)$, and therefore:  
\begin{equation}\label{l1}
\lim_{z\to1}P_{1,n}(z,q)(L_1(z;q)+L_1(1/z;1/q))=-\frac{1}{2}P_{1,n}(1,q).
\end{equation}
It remains to let $z$ tend to 1 in
\eqref{devcalSntilde} to get the Lemma, via the definition \eqref{defcalSntilde}, the relations \eqref{**}, \eqref{Ysbetaqpair} and \eqref{l1}, together with the fact that $L_s(1;q^2)=Y_s(q)$ for $s\geq2$. 
\end{proof}

\section{Arithmetical investigations}

In this section, we find for any odd positive integer $n$ a common denominator $D_n(q)$ to the coefficients $\hat{P}_{j,n}(q^2)\in\Q(q)$. Let 
$d_n(x)\in\Z[x]$ be the unitary polynomial, with lowest degree, and common multiple of  $1-x,1-x^2,\dots,1-x^n$. Recall some standard properties on cyclotomic polynomials. For $t\in\N$, the $t$-th cyclotomic polynomial is defined by $\phi_t(x):=\prod_{k\wedge t=1,k\leq t}(x-\mbox{e}^{2ik\pi/t})$,
and satisfies $\phi_t(x)\in\Z[x]$. Then one can prove:
\begin{equation}\label{1000}
x^n-1=\prod_{d|n}\phi_d(x),
\end{equation}
 which yields:
\begin{equation}\label{dnaveccyclot}
d_n(x)=\prod_{t=1}^n\phi_t(x).
\end{equation}
We will also need the following polynomials:
\begin{equation}\label{deltanaveccyclot}
\Delta_n(x):=\prod_{t=1\atop t\,\mbox{\scriptsize{odd}}}^{2n-1}\phi_t(x),
\end{equation}
and
\begin{equation}\label{phiprod}
\varphi_n(x):=\phi_2(x)^n\phi_4(x)^{\left\lfloor n/2\right\rfloor}\dots\phi_{2n}(x),
\end{equation}
where $\left\lfloor x\right\rfloor$ is the integer part of the real number $x$. In what follows, we denote by $ord_{\phi_t(x)}(Q(x))$ the greatest power of $\phi_t(x)$ dividing the polynomial $Q(x)$. We have the following useful arithmetical Lemma:
\begin{lem}
Let $e$ be an odd positive integer and $w_n(x):=\prod_{i=1}^n\frac{1-x^{e+2i}}{1-x^{2i}}$. Then
$$\varphi_n(x)\,w_n(x)\in\mathbb{Z}\left[x\right].$$
\end{lem}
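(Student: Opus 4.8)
The plan is to track, for each cyclotomic polynomial $\phi_t(x)$, its exponent in the numerator and denominator of the rational function $w_n(x)$, and to check that the extra cyclotomic factors packed into $\varphi_n(x)$ exactly absorb those denominators. Concretely, using \eqref{1000} I would record that $\mathrm{ord}_{\phi_t}(1-x^m)=1$ if $t\mid m$ and $0$ otherwise; since $w_n$ has equally many factors of the form $1-x^m$ in its numerator and denominator, the signs coming from $1-x^m=-(x^m-1)$ cancel in pairs, and $w_n(x)=\prod_{t\ge1}\phi_t(x)^{b_t}$ where
$$b_t=\#\{i:1\le i\le n,\ t\mid e+2i\}-\#\{i:1\le i\le n,\ t\mid 2i\}\in\Z.$$
From \eqref{phiprod}, $\varphi_n(x)=\prod_{t\ge1}\phi_t(x)^{c_t}$ with $c_t=\lfloor 2n/t\rfloor$ when $t$ is even and $t\le 2n$, and $c_t=0$ otherwise. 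Since each $\phi_t$ lies in $\Z[x]$, it then suffices to prove the inequality $b_t+c_t\ge0$ for every $t\ge1$, after which $\varphi_n(x)w_n(x)=\prod_{t\ge1}\phi_t(x)^{b_t+c_t}\in\Z[x]$ follows at once.

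The only real work is this counting inequality, which I would handle by cases on the parity of $t$. If $t$ is even, then $e+2i$ is odd (because $e$ is odd), so $t\nmid e+2i$ for every $i$; the first count vanishes, so $b_t=-\#\{i:1\le i\le n,\ (t/2)\mid i\}$, which equals $-\lfloor 2n/t\rfloor$ when $t\le 2n$ and $0$ when $t>2n$. In both subcases $b_t=-c_t$, hence $b_t+c_t=0$. If $t$ is odd, then $c_t=0$; moreover $t\mid 2i\iff t\mid i$, so the second count is $\lfloor n/t\rfloor$, while $t\mid e+2i$ pins $i$ to a single residue class modulo $t$ (as $2$ is invertible mod $t$), and an elementary check shows every residue class modulo $t$ meets $\{1,\dots,n\}$ in at least $\lfloor n/t\rfloor$ elements. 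Therefore $b_t\ge0=-c_t$. Combining the two cases gives $b_t+c_t\ge0$ for all $t$, which is exactly what is needed.

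I do not expect a genuine obstacle: the one place to be careful is the off-by-one in the odd case, namely that the residue class forced by $t\mid e+2i$ can contain one more element of $\{1,\dots,n\}$ than the class of multiples of $t$, but never fewer, so this surplus is harmless and no factor of $\varphi_n(x)$ is needed to compensate it. It is also worth noting that the final passage to $\Z[x]$ is immediate once the exponent inequality holds, since $\varphi_n$ contributes only non-negative exponents of monic integer cyclotomic polynomials; no separate Gauss-lemma argument is required.
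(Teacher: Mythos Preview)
Your proof is correct and follows essentially the same approach as the paper: factor $w_n(x)$ into cyclotomic polynomials via \eqref{1000}, split by the parity of $t$, and check that for even $t$ the exponent of $\phi_t$ in $\varphi_n$ exactly cancels its deficit in $w_n$, while for odd $t$ the exponent in $w_n$ is already nonnegative. The only cosmetic difference is that the paper handles the odd case by grouping the product into blocks of length $t$ (each full block contributing order $0$, the last partial block contributing $0$ or $1$), whereas you argue directly by counting residue classes; the two computations are equivalent.
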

\begin{rem}
\emph{We can see that $\varphi_n(x)$ always divides $\prod_{i=1}^n(1+x^i)^2$, so we could use this alternative polynomial later, instead of $\varphi_n(x)$. This would give simpler manipulations, but we would have less good asymptotic properties (see Lemma~5.4).}
\end{rem}

\begin{proof}[Proof of Lemma~4.1]
From \eqref{1000}, $w_n(x)$ is a quotient of products of cyclotomic polynomials $\phi_t(x)$. It is enough to prove that $ord_{\phi_t(x)}(\varphi_n(x)\,w_n(x))\geq0$ for all $t$. Assume first that $t$ is even. As $e$ is odd, $\phi_t(x)$ can only be a factor in the denominator of $w_n(x)$. Then we see directly that  $ord_{\phi_t(x)}(w_n(x))=-\left\lfloor \frac{2n}{t}\right\rfloor=-ord_{\phi_t(x)}(\varphi_n(x))$, thus $ord_{\phi_t(x)}(\varphi_n(x)\,w_n(x))=0$.

\medskip
\noindent Now if $t$ is odd, we have:
\begin{eqnarray*}
ord_{\phi_t(x)}(\varphi_n(x)\,w_n(x))&=&ord_{\phi_t(x)}(w_n(x))\\
&=&\sum_{j=0}^{\left\lfloor \frac{n}{t}\right\rfloor-1}ord_{\phi_t(x)}\left(\prod_{i=jt+1}^{(j+1)t}\frac{1-x^{e+2i}}{1-x^{2i}}\right)\\
&&\hskip 3cm+ord_{\phi_t(x)}\left(\prod_{i=t\left\lfloor \frac{n}{t}\right\rfloor+1}^{n}\frac{1-x^{e+2i}}{1-x^{2i}}\right).
\end{eqnarray*}
But the orders of divisibility in the sum over $j$ are all equal to $1-1=0$. Moreover we have:
$$ord_{\phi_t(x)}\left(\prod_{i=t\left\lfloor \frac{n}{t}\right\rfloor+1}^{n}\frac{1-x^{e+2i}}{1-x^{2i}}\right)=ord_{\phi_t(x)}\left(\prod_{i=t\left\lfloor \frac{n}{t}\right\rfloor+1}^{n}(1-x^{e+2i})\right)\in\{0;1\},$$
which proves the Lemma.
\end{proof}

Throughout this section, we assume  $n$ and $A$ to be odd positive integers, and $r$ to be a positive integer such that $A-2r>0$.
\begin{lem}
For all $s\in\{1,\dots,A\}$ and $j\in\{0,\dots,n\}$, we have:
$$\varphi_n(1/q)^{2r}\,d_n\left(1/q^2\right)^{A-s}c_{s,j,n}(q^2)\in\mathbb{Z}\left[q;\frac{1}{q}\right].$$
\end{lem}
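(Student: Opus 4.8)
The plan is to evaluate at $q^2$ throughout, which turns every half-integral power of $q$ occurring in $R_n$ and in $c_{s,j,n}$ into an integral one — here one uses that $A$ and $n$ are odd, so that $A(n^2-1)/2$, $\bigl((A-2r)n+A-2\bigr)/2$ and $2(1/2-j)=1-2j$ are integers — so that the sole obstruction to membership in $\Z[q,1/q]$ becomes a denominator which is a product of cyclotomic polynomials of $1/q$. Write $x=1/q$. By \eqref{c2},
\[
c_{s,j,n}(q^2)=\frac{q^{(1-2j)s}}{(A-s)!}\left[\frac{d^{A-s}}{du^{A-s}}F_j(u)\right]_{u=1},\qquad F_j(u):=R_n\!\bigl(uq^{1-2j};q^2\bigr)(u-1)^{A}.
\]
I would first expand $F_j$ from \eqref{Rn}: under $T=uq^{1-2j}$ the factor $(u-1)^{A}$ cancels the $i=j$ factor in the denominator of $R_n$, leaving
\[
F_j(u)=\varepsilon\,q^{N}\,(q^2;q^2)_n^{A-2r}\,u^{m}\,\frac{H_j(u)}{\prod_{i\neq j}\bigl(u-q^{2j-2i}\bigr)^{A}},
\]
with $H_j(u):=\prod_{l=0}^{rn-1}(1-uq^{1-2j-2rn+2l})(1-uq^{1-2j+2n+2l})$, $\varepsilon=\pm1$, $N\in\Z$, and $m=(A-2r)n/2+A/2-2\in\N$. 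As $A-2r>0$, the zeros of $H_j$ lie at odd powers of $q$ while the points $q^{2j-2i}$ are even powers, so there is no cancellation: the poles of $F_j$ are exactly the $q^{2j-2i}$, $i\neq j$, each of order $A$.

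Next I would apply the Leibniz rule to $(q^2;q^2)_n^{A-2r}\cdot u^{m}\cdot H_j(u)\cdot\prod_{i\neq j}(u-q^{2j-2i})^{-A}$; evaluating the $(A-s)$-th derivative at $u=1$ expresses $c_{s,j,n}(q^2)$ as a finite sum of terms, each being $\pm$ a power of $q$ times an expression of the form
\[
(q^2;q^2)_n^{A-2r}\,\binom{m}{a}\prod_{l\notin S}\bigl(1-q^{e_l}\bigr)\prod_{i\neq j}\binom{A+b_i-1}{b_i}(1-q^{2j-2i})^{-A-b_i},
\]
where $S$ is a subset of the $2rn$ factors of $H_j$, the $e_l$ ($l\notin S$) are the corresponding exponents (all odd), and $a,b_i\ge0$ with $a+|S|+\sum_{i\neq j}b_i=A-s$. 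Every factor here lies in $\Z[q,1/q]$ except $\prod_{i\neq j}(1-q^{2j-2i})^{-A-b_i}$, so it suffices to show, for every cyclotomic polynomial $\phi_t$, that the order of $\phi_t(x)$ in the denominator of such a term does not exceed its order in $\varphi_n(x)^{2r}\,d_n(x^2)^{A-s}$ plus the orders available in the numerator factors $(q^2;q^2)_n^{A-2r}$ and $\prod_{l\notin S}(1-q^{e_l})$.

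This verification splits according to the parity of $t$, after noting that the cyclotomic content of $1-q^{2j-2i}$ is that of $1-x^{2|i-j|}$ and that of $1-q^{e}$ ($e$ odd) is that of $1-x^{|e|}$. For $t=2\mu$ even, the factors $1-q^{e_l}$ contribute nothing (the $e_l$ are odd), the denominator order of a term is at most $A\lfloor n/\mu\rfloor+(A-s)$, and $\varphi_n(x)^{2r}$, $d_n(x^2)^{A-s}$, $(q^2;q^2)_n^{A-2r}$ together supply $2r\lfloor n/\mu\rfloor+(A-s)+(A-2r)\lfloor n/\mu\rfloor=A\lfloor n/\mu\rfloor+(A-s)$ copies of $\phi_{2\mu}(x)$ — exactly enough, so here $\varphi_n^{2r}$ covers the deficit $A-(A-2r)=2r$. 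For $t$ odd, $\varphi_n(x)$ contributes nothing and the deficit is instead carried by $H_j$: among the $rn$ odd integers $2j+1,2j+3,\dots,2j+2rn-1$ at least $\lfloor rn/t\rfloor\ge r\lfloor n/t\rfloor$ are divisible by $t$, and likewise for the other $rn$ odd exponents in $H_j$, so $H_j$ carries at least $2r\lfloor n/t\rfloor$ copies of $\phi_t(x)$ in all; keeping all but the $|S|$ differentiated factors and using $\sum_{i\neq j}b_i=(A-s)-a-|S|$, a short case distinction (whether $|S|$ exceeds that count or not) yields the required inequality. This is exactly where Lemma~4.1 is used — equivalently, the leading coefficient $c_{A,j,n}(q^2)$ equals, up to a power of $q$, $\qbi{n}{j}{q^2}^{A}$ times $2r$ ratios of the shape $\prod_{i=1}^{n}\frac{1-q^{e+2i}}{1-q^{2i}}$ with $e$ odd, which become polynomials after multiplication by $\varphi_n$ by Lemma~4.1. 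Summing the terms gives the statement. The main obstacle is precisely this bookkeeping at the odd $t$: one must exploit the trade-off whereby differentiating a linear factor of $H_j$ removes one potentially cancelling factor but simultaneously consumes one derivative, so the exponents $b_i$ in the denominator cannot grow at the same time — this compensation, together with Lemma~4.1, is what keeps the power of $d_n(1/q^2)$ needed down to $A-s$ rather than a power of order $A$.
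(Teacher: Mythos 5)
Your proof is correct, and it reaches the conclusion by a genuinely different mechanism than the paper's. The paper also starts from \eqref{c2} and Leibniz's rule, but it first factors $V_n(T;q)=R_n(T;q)(T-q^{1/2-j})^A$ into the product $T^{A/2-2-n/2}F(T)^{A/2-r+1/2}G(T)^{A/2-r-1/2}\prod_{l=1}^rH_l(T)I_l(T)$ of \eqref{fcF}--\eqref{fcIl}: each block has only simple poles at the $q^{1/2-i}$, with partial-fraction coefficients that are integral up to $q$-binomial coefficients (for $F$, $G$) or up to the $w_n$-type quotients of Lemma~4.1 (for $H_l$, $I_l$), so each derivative falling on a block costs exactly one power of $d_n(1/q^2)$ and each of the $2r$ blocks $H_l,I_l$ costs one $\varphi_n(1/q)$ outright, independently of differentiation. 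You instead keep $V_n$ as a single product of linear factors, expand the $(A-s)$-th derivative multinomially, and run a global $\phi_t(x)$-adic valuation count on each term, splitting on the parity of $t$; your accounting is sound --- the even case balances exactly ($2r\lfloor n/\mu\rfloor+(A-2r)\lfloor n/\mu\rfloor+(A-s)$ against at most $A\lfloor n/\mu\rfloor+\sum b_i$), and the odd case rests on the $2\lfloor rn/t\rfloor\geq 2r\lfloor n/t\rfloor$ multiples of $t$ among the odd exponents of $H_j$ together with the trade-off $a+|S|+\sum b_i=A-s$, which in effect reproves Lemma~4.1 inline rather than invoking it. The paper's factorization buys explicit, reusable integral building blocks (exploited again in the asymptotics of Lemma~4.4 and relevant to the conjecture of Section~6.3); your valuation argument is more elementary and self-contained but yields less structural information about the individual coefficients $c_{s,j,n}$.
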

\begin{proof}
Rewrite (\ref{c}) as:
\begin{equation}\label{crecrit}
c_{s,j,n}(q)=\frac{1}{(A-s)!}\left[\frac{d^{A-s}}{dT^{A-s}}V_n(T;q)\right]_{T=q^{1/2-j}},
\end{equation}
with
\begin{eqnarray}
V_n(T;q)&:=&R_n(T;q)(T-q^{1/2-j})^A\nonumber\\
&=&(q)_n^{A-2r}T^{(A-2r)n/2+A/2-2}q^{-A(n^2-1)/2-((A-2r)n+A-2)/4}\nonumber\\
&&\hskip
1cm\times\frac{(q^{-rn}T,q^{n}T)_{rn}(T-q^{1/2-j})^A}{(T-q^{1/2})^{A}(T-q^{1/2-1})^{A}\dots (T-q^{1/2-n})^{A}}\cdot\label{V}
\end{eqnarray}
We collect the terms of $V_n(T;q)$ as follows:
$$V_n(T;q)=q^{an^2+bn+c}T^{A/2-2-n/2}F(T)^{A/2-r+1/2}G(T)^{A/2-r-1/2}\prod_{l=1}^rH_l(T)I_l(T),
$$
where $a$, $b$ and $c$ are integers (or half-integers) depending only on
$A$ and $r$, and the functions $F$, $G$, $H_l$ and $I_l$ satisfy:
\begin{multline}\label{fcF}
F(T):=q^{-n(n+1)/2}\frac{(q)_nT^n(T-q^{1/2-j})}{(T-q^{1/2})(T-q^{1/2-1})\dots (T-q^{1/2-n})}\\
=(-1)^n(1/q;1/q)_n+\sum_{{i=0\atop i\neq j}}^n(-1)^{n-i+1}q^{-i(i+1)/2}\qbi{n}{i}{1/q}\frac{q^{1/2-j}-q^{1/2-i}}{T-q^{1/2-i}},
\end{multline}
\begin{multline}\label{fcG}
G(T):=q^{-n(n+1)/2}\frac{(q)_n(T-q^{1/2-j})}{(T-q^{1/2})(T-q^{1/2-1})\dots (T-q^{1/2-n})}\\
=\sum_{{i=0\atop i\neq j}}^n(-1)^{n-i+1}q^{(i-1/2)n-i(i+1)/2}\qbi{n}{i}{1/q}\frac{q^{1/2-j}-q^{1/2-i}}{T-q^{1/2-i}},
\end{multline}
\begin{multline}\label{fcHl}
H_l(T):=q^{-n(n+1)/2}\frac{(q^{-ln}T)_n(T-q^{1/2-j})}{(T-q^{1/2})(T-q^{1/2-1})\dots (T-q^{1/2-n})}\\
=(-1)^nq^{-ln^2-n}+\sum_{{i=0\atop i\neq j}}^n(-1)^{i+1}q^{-(n-i)^2/2-(2n+i)/2}\qbi{n}{i}{1/q}\\
\times\frac{(q^{-(l-1)n-i-1/2};q^{-1})_n}{(q^{-1};q^{-1})_n}\frac{q^{1/2-j}-q^{1/2-i}}{T-q^{1/2-i}},
\end{multline}
\begin{multline}\label{fcIl}
I_l(T):=q^{-n(n+1)/2}\frac{(q^{ln}T)_n(T-q^{1/2-j})}{(T-q^{1/2})(T-q^{1/2-1})\dots (T-q^{1/2-n})}\\
=(-1)^nq^{ln^2-n}+\sum_{{i=0\atop i\neq j}}^n(-1)^{n+i+1}q^{ln^2-n-i(i+1)/2}\qbi{n}{i}{1/q}\\
\times\frac{(q^{-(l+1)n-i-1/2};q^{-1})_n}{(q^{-1};q^{-1})_n}\frac{q^{1/2-j}-q^{1/2-i}}{T-q^{1/2-i}}\cdot
\end{multline}
We see that if $q$ is replaced by $q^2$ and if $U$ denotes any of the functions $F$, $G$, or $T\mapsto T^{A/2-2-n/2}$, then by using the partial fraction expansions \eqref{fcF} and \eqref{fcG}:
$$\frac{d_n\left(1/q^2\right)^{\mu}}{\mu!}\left[\frac{d^{\mu}}{dT^{\mu}}U(T)\right]_{T=q^{1-2j}}\in\mathbb{Z}\left[q;\frac{1}{q}\right]\;\;\;\forall\mu\in\N.$$ 
Now if $q$ is replaced by $q^2$ and if $U$ denotes any of the functions $H_l$ or $I_l$, then by using the partial fraction expansions \eqref{fcHl}, \eqref{fcIl}, and Lemma~4.1, we get for all $\mu\in\N$:
$$\varphi_n(1/q)\,\frac{d_n\left(1/q^2\right)^{\mu}}{\mu!}\left[\frac{d^{\mu}}{dT^{\mu}}U(T)\right]_{T=q^{1-2j}}\in\mathbb{Z}\left[q;\frac{1}{q}\right].$$ 
We can easily conclude by using (\ref{crecrit}) and by applying Leibniz's formula for the $(\mu=A-s)$-th differentiation of a product of functions.
\end{proof}

\medskip
\noindent 

\begin{lem}
Set $\alpha=-A/4-r^2$. Then there exist real numbers $\beta'$ and $\gamma'$
depending only on $A$ and $r$ such that for all $(s,j)\in\{1,\dots,A\}\times\{0,\dots,n\}$:
$$\lim_{q\to+\infty}q^{\alpha n^2+\beta'
n+\gamma'}c_{s,j,n}(q^2)<\infty.$$
\end{lem}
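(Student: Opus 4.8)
The plan is to estimate directly, as $q\to+\infty$, the degree in $q$ (write $\deg_q$ for this order of growth at $+\infty$) of the rational function $c_{s,j,n}(q^2)$ — which is rational in $q$ by Lemma~4.3 — using the explicit material built up in the proof of Lemma~4.3. Starting from \eqref{c2} and the relation $V_n(uq^{1/2-j};q)=(q^{1/2-j})^A(u-1)^AR_n(uq^{1/2-j};q)$ coming from \eqref{V}, one has
$$c_{s,j,n}(q)=\frac{q^{(1/2-j)(s-A)}}{(A-s)!}\left[\frac{d^{A-s}}{du^{A-s}}V_n(uq^{1/2-j};q)\right]_{u=1}.$$
Since $F(T)=T^nG(T)$ by \eqref{fcF} and \eqref{fcG}, the factorisation of $V_n$ from the proof of Lemma~4.3 collapses — a direct bookkeeping of the powers of $q$ shows that its $n^2$-term cancels, i.e. $a=0$ — to
$$V_n(T;q)=q^{bn+c}\,T^{(A-2r)n/2+A/2-2}\,G(T)^{A-2r}\prod_{l=1}^{r}H_l(T)I_l(T),$$
with $b,c$ depending only on $A$ and $r$ (all exponents here are integers because $n$ and $A$ are odd). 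I would then bound $\deg_q$ of each factor evaluated at $T=uq^{1/2-j}$, for $u$ ranging in a punctured neighbourhood of $1$; this bound controls the Taylor coefficients at $u=1$ and hence, by Leibniz's rule, the $(A-s)$-th derivative at $u=1$. Note that differentiation in the independent variable $u$ cannot raise $\deg_q$: the factors are regular at $u=1$, and after the partial fraction expansion in $u$ of $1/\prod_{i\neq j}(uq^{i-j}-1)$ their logarithmic $u$-derivatives have non-positive degree in $q$, since $\deg_q\big(q^{i-j}/(q^{i-j}-1)\big)\leq 0$.

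Next I would carry out the factor-by-factor estimates. Using $\prod_{i=0}^{n}(uq^{1/2-j}-q^{1/2-i})=q^{(n+1)(1-n)/2}(u-1)\prod_{i\neq j}(uq^{i-j}-1)$ and $q^{-n(n+1)/2}(q)_n\to(-1)^n$, one obtains $G(uq^{1/2-j})=q^{-n(n+1)/2}(q)_n\,q^{n^2/2-j}/\prod_{i\neq j}(uq^{i-j}-1)$, hence $\deg_qG(uq^{1/2-j})\leq n^2/2-n+(n-j)/2-(n-j)^2/2$. The same computation, together with $(q^{-ln}uq^{1/2-j})_n\to1$ and $(q^{ln}uq^{1/2-j})_n\sim(-u)^nq^{ln^2-nj+n^2/2}$, gives $\deg_qH_l(uq^{1/2-j})\leq-3n/2+(n-j)/2-(n-j)^2/2$ and $\deg_qI_l(uq^{1/2-j})\leq ln^2-n^2/2-3n/2+n(n-j)+(n-j)/2-(n-j)^2/2$. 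Adding to these $\deg_qT^{(A-2r)n/2+A/2-2}\big|_{T=uq^{1/2-j}}=(1/2-j)\big((A-2r)n/2+A/2-2\big)$, the contribution $bn+c$ of the power of $q$, the contribution $(1/2-j)(s-A)$ of $q^{(1/2-j)(s-A)}$, and finally doubling everything for the substitution $q\mapsto q^2$ (which multiplies every degree by $2$), the degree-two part of the resulting upper bound for $\deg_qc_{s,j,n}(q^2)$ becomes — after the essential cancellation of $\pm(A-2r)n^2/2$ between the power of $T$ and $G(T)^{A-2r}$ — equal to $2\big(\tfrac{r^2}{2}n^2+\tfrac{A}{2}\,m(n-m)\big)$, where $m=n-j$. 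Maximising $m(n-m)$ over $0\leq m\leq n$, this is at most $\big(\tfrac{A}{4}+r^2\big)n^2=-\alpha n^2$.

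It then remains to observe that all the other terms of the bound are polynomials in $n$ of degree at most one, with coefficients depending only on $A$ and $r$, and bounded uniformly for $(s,j)\in\{1,\dots,A\}\times\{0,\dots,n\}$ (once the quadratic part is isolated, the index $j$, equivalently $m$, enters only linearly, while $s\leq A$ is bounded). Hence there exist real numbers $\beta'$ and $\gamma'$, depending only on $A$ and $r$, such that $\deg_qc_{s,j,n}(q^2)\leq-\alpha n^2-\beta'n-\gamma'$ for every odd $n$ and every such pair $(s,j)$, which is precisely the assertion $\lim_{q\to+\infty}q^{\alpha n^2+\beta'n+\gamma'}c_{s,j,n}(q^2)<\infty$.

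The step I expect to be the main obstacle is obtaining the sharp leading coefficient. A crude ``degree of a product is at most the sum of the degrees'' estimate applied term-by-term to the factorisation of $V_n$ overshoots (it gives roughly $\big(\tfrac{A}{2}+r^2+r\big)n^2$ instead of $\big(\tfrac{A}{4}+r^2\big)n^2$), so one must first combine the pure powers of $T$ with the factors $G,H_l,I_l$ and only then estimate, and afterwards verify that the leading quadratic form collapses exactly to $\tfrac{r^2}{2}n^2+\tfrac{A}{2}m(n-m)$, whose maximum over $0\leq m\leq n$ is attained for $j\approx n/2$. Making the ``differentiation in $u$ does not increase the $q$-degree'' step fully rigorous is an easy but necessary complement.
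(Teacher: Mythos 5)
Your proof is correct and follows essentially the same route as the paper's: both reduce the problem to estimating the $q$-order of $V_n(uq^{1/2-j};q)$ at $u=1$, observe via logarithmic derivatives that the $A-s$ differentiations cannot increase that order, arrive at the same quadratic form $\tfrac{r^2}{2}n^2+\tfrac{A}{2}j(n-j)$, and optimize over $j$ to get $\alpha=-A/4-r^2$. The only difference is organizational: the paper computes the order of $V_n(q^{1/2-j};q)$ directly from \eqref{V} and packages the derivative bookkeeping with Fa\`a di Bruno's formula, whereas you route the estimate through the $F,G,H_l,I_l$ factorisation (correctly noting $F=T^nG$ and that the $n^2$-term in the prefactor vanishes).
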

\begin{proof}
 Recall the expression (\ref{crecrit}) from the previous proof,
and let
$v_n(T;q):=\frac{d}{dT}V_n(T;q)/V_n(T;q)$
be the logarithmic derivative of $V_n(T;q)$, in the variable $T$. As in \cite{KRZ} and \cite{JM}, we use Fa\`a di Bruno's differentiation formula, which gives for all $\mu\in\N$:
\begin{equation}\label{FaaVn}
\frac{1}{\mu!}\frac{d^{\mu}}{dT^{\mu}}V_n(T;q)=\sum_{k_1+\dots+\mu
k_\mu=\mu}\frac{V_n(T;q)}{k_1!\dots
k_\mu!}\prod_{l=1}^\mu\left(\frac{1}{l!}\frac{d^{l-1}}{dT^{l-1}}v_n(T;q)\right)^{k_l}.
\end{equation}
By \eqref{V} we get:
\begin{eqnarray*}
v_n(T;q)&=&\frac{d}{dT}(\log V_n(T;q))\\
&=&\frac{(A-2r)n/2+A/2-2}{T}+\sum_{i=1}^{rn}\frac{1}{T-q^i}\\
&&\hskip 3cm
+\sum_{i=n}^{rn+n-1}\frac{1}{T-q^{-i}}-A\sum_{{i=0\atop i\neq
j}}^{n}\frac{1}{T-q^{1/2-i}},
\end{eqnarray*}
thus for all $l\in\N^*$:
\begin{multline*}
\frac{(-1)^{l-1}}{(l-1)!}\frac{d^{l-1}}{dT^{l-1}}v_n(T;q)=\frac{(A-2r)n/2+A/2-2}{T^l}+\sum_{i=1}^{rn}\frac{1}{(T-q^i)^l}\\
+\sum_{i=n}^{rn+n-1}\frac{1}{(T-q^{-i})^l}-A\sum_{{i=0\atop i\neq
j}}^{n}\frac{1}{(T-q^{1/2-i})^l}\cdot
\end{multline*}
We can rewrite this as follows:
\begin{multline*}
\frac{(-1)^{l-1}}{(l-1)!}\frac{d^{l-1}}{dT^{l-1}}v_n(T;q)=\frac{(A-2r)n/2+A/2-2}{T^l}+\sum_{i=1}^{rn}\left(\frac{q^{-i}}{Tq^{-i}-1}\right)^l\\
+\sum_{i=n}^{rn+n-1}\frac{1}{(T-q^{-i})^l}-A\sum_{i=0}^{j-1}\left(\frac{q^{i}}{Tq^{i}-1}\right)^l-A\sum_{i=j+1}^{n}\frac{1}{(T-q^{-i})^l}\cdot
\end{multline*}
Therefore $\forall j\in\{0,\dots,n\}$,
$\displaystyle\lim_{q\to+\infty}q^{(1/2-j)l}\left[\frac{d^{l-1}}{dT^{l-1}}v_n(T;q)\right]_{T=q^{1/2-j}}<\infty$.
We deduce that for $k_1+\dots+\mu k_\mu=\mu$:
\begin{equation}\label{E}
\lim_{q\to+\infty}q^{(1/2-j)\mu}\left[\prod_{l=1}^\mu\left(\frac{1}{l!}\frac{d^{l-1}}{dT^{l-1}}v_n(T;q)\right)^{k_l}\right]_{T=q^{1/2-j}}<\infty.
\end{equation}
Besides, $V_n(q^{1/2-j};q)$ defined by \eqref{V} satisfies:
$$\lim_{q\to+\infty}V_n(q^{1/2-j};q)\times q^{-j(An-A+3)/2+j^2A/2-rn(rn-2)/2-A/2}=1.$$

\medskip
\noindent 
It remains to choose $\mu=A-s$ in (\ref{FaaVn}). With the help of (\ref{crecrit}) and  \eqref{E} we then get:
$$\lim_{q\to+\infty}q^{(1/2-j)(A-s)-\left(j(An-A+3)/2-j^2A/2+rn(rn-2)/2+A/2\right)}c_{s,j,n}(q)<\infty\;.$$
Replacing $q$ by $q^2$, we can easily conclude, for we have: $\forall (s,j)\in\{1,\dots,A\}\times\{0,\dots,n\}$,
$$(1-2j)(A-s)-\left(j(An-A+3)-j^2A+rn(rn-2)+A\right)\geq \alpha n^2+\beta' n+\gamma'\;,$$
where $\alpha=-A/4-r^2$, $\beta'=2r-(A+1)/2$, $\gamma'=-1/2-1/(2A)$ (this lower bound is obtained for $s=1$ and $j=(n+1)/2+1/(2A)$).
\end{proof}

\medskip
Now we can prove the following Lemma, which gives an expression for $D_n(q)$, a common denominator to $\hat{P}_{j,n}(q^2)$, for $j\in\{0,2,4,\dots,A-1\}$:
\begin{lem}
Let $n$ and $A$ be odd positive integers, and $r\in\N^*$ such that $A-2r>0$. For $\alpha=-A/4-r^2$, there exist $\beta$ and $\gamma$ real numbers depending only on $A$ and $r$ such that for all $j\in\{2,4,\dots,A-1\}$:
\begin{equation}\label{denominatpjchap}
(A-1)!\,q^{\lfloor\alpha n^2+\beta
n+\gamma\rfloor}\varphi_n(1/q)^{2r}\,d_n(1/q^2)^{A-j}\hat{P}_{j,n}(q^2)\in\Z\left[\frac{1}{q}\right]
\end{equation}
and
\begin{equation}\label{denominatp0chap}
q^{\lfloor\alpha n^2+\beta
n+\gamma\rfloor}\varphi_n(1/q)^{2r}\,d_{2n}(1/q)^{A-1}\,\Delta_n(1/q)\,\hat{P}_{0,n}(q^2)\in\Z\left[\frac{1}{q}\right].
\end{equation}
Thus, by setting
\begin{equation}\label{DenominateurcommunDn}
D_n(q):=(A-1)!\,q^{\lfloor\alpha n^2+\beta
n+\gamma\rfloor}\varphi_n(1/q)^{2r}\,d_{2n}(1/q)^{A-1}\,\Delta_n(1/q),
\end{equation}
we get:
$$D_n(q)\hat{P}_{j,n}(q^2)\in\Z\left[\frac{1}{q}\right]\;\;\forall
j\in\{0,2,4,\dots,A-1\}.$$
\end{lem}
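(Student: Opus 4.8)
The plan is to establish \eqref{denominatpjchap} and \eqref{denominatp0chap} separately, in both cases by combining Lemma~4.3 (which controls the denominators of the $c_{s,j,n}(q^2)$, viewed as rational fractions in $1/q$), Lemma~4.4 (which controls their degree in $q$ near $q=+\infty$), and elementary divisibility properties of cyclotomic polynomials; the common denominator \eqref{DenominateurcommunDn} is then obtained by taking a common multiple of the denominators so produced.

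For $j\in\{2,4,\dots,A-1\}$ I would start from \eqref{Pjchap}, writing
$$
(A-1)!\,\hat P_{j,n}(q^2)=\sum_{s=j}^A\frac{2(A-1)!\,c(s-1,j-1)}{(s-1)!}\,P_{s,n}(1,q^2),
$$
where every coefficient is an integer, since $(s-1)!$ divides $(A-1)!$ for $1\le s\le A$ and the $c(s-1,j-1)$ are integers. Using \eqref{Pj} and the relation $d_{s,j',n}(q^2)=(-1)^sq^{(2j'-1)s}c_{s,j',n}(q^2)$ coming from \eqref{Alpha}, $P_{s,n}(1,q^2)$ becomes a $\Z$-linear combination of the quantities $q^{(1-2j')(1-s)}c_{s,j',n}(q^2)$, $0\le j'\le n$. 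By Lemma~4.3, $\varphi_n(1/q)^{2r}\,d_n(1/q^2)^{A-s}c_{s,j',n}(q^2)\in\Z[q;1/q]$, and since $j\le s$ and $d_n\in\Z[x]$, multiplying further by $d_n(1/q^2)^{s-j}$ gives $\varphi_n(1/q)^{2r}\,d_n(1/q^2)^{A-j}c_{s,j',n}(q^2)\in\Z[q;1/q]$ for all $s\in\{j,\dots,A\}$; hence $(A-1)!\,\varphi_n(1/q)^{2r}\,d_n(1/q^2)^{A-j}\hat P_{j,n}(q^2)\in\Z[q;1/q]$. Since $\varphi_n(1/q)$ and $d_n(1/q^2)$ are polynomials in $1/q$ with nonzero constant term, the degree in $q$ of this Laurent polynomial equals that of $\sum_{s}\frac{2(A-1)!\,c(s-1,j-1)}{(s-1)!}\sum_{j'}(\pm)q^{(1-2j')(1-s)}c_{s,j',n}(q^2)$, which by Lemma~4.4 is at most $\alpha n^2+\beta n+\gamma$ for suitable real $\beta,\gamma$ depending only on $A$ and $r$ (the exponents $(1-2j')(1-s)$, linear in $n$, being absorbed into $\beta$). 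Multiplying by $q^{\lfloor\alpha n^2+\beta n+\gamma\rfloor}$ and using $\lfloor x\rfloor\le x$ then yields \eqref{denominatpjchap}.

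For $j=0$ the new ingredient is that $\hat P_{0,n}(q^2)=P_{0,n}(1,q^2)+q^{-2n(r-1)}P_{0,n}(1,1/q^2)-\tfrac12P_{1,n}(1,q^2)$, and that $P_{0,n}(1,q^2)$, by \eqref{P0}, carries the genuinely new denominators $(1-q^{2k-1})^s$ with $1\le k\le j\le n$ and $1\le s\le A$. Writing $1-q^{2k-1}=-q^{2k-1}\bigl(1-(1/q)^{2k-1}\bigr)$ and, by \eqref{1000}, $1-(1/q)^{2k-1}=-\prod_{d\mid 2k-1}\phi_d(1/q)$, these denominators are, up to powers of $q$, products of odd-indexed cyclotomic polynomials $\phi_t(1/q)$ with $t\le 2n-1$, i.e.\ divisors of $\Delta_n(1/q)$. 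Using the classical identities $\phi_t(x^2)=\phi_t(x)\phi_{2t}(x)$ for $t$ odd and $\phi_t(x^2)=\phi_{2t}(x)$ for $t$ even, I would deduce $d_n(1/q^2)=\prod_{t\ \mathrm{odd},\,t\le n}\phi_t(1/q)\cdot\prod_{u\ \mathrm{even},\,u\le 2n}\phi_u(1/q)$, so that $d_n(1/q^2)\mid d_{2n}(1/q)$; a comparison of $ord_{\phi_t}$ on both sides (distinguishing the parity of $t$ and whether $t\le n$, $n<t\le 2n-1$, or $t>2n-1$) moreover shows that
$$
d_n(1/q^2)^{A-s}\bigl(1-(1/q)^{2k-1}\bigr)^s\ \big|\ d_{2n}(1/q)^{A-1}\,\Delta_n(1/q)
$$
for every $s\in\{1,\dots,A\}$ and $k\in\{1,\dots,n\}$, the key point being that $d_{2n}(1/q)^{A-1}$ already contains $\Delta_n(1/q)^{A-1}$, so the extra displayed factor $\Delta_n(1/q)$ leaves exactly enough room for the full $s$-th power of the odd-pole part. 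Combined with Lemma~4.3, this shows that $\varphi_n(1/q)^{2r}\,d_{2n}(1/q)^{A-1}\,\Delta_n(1/q)\,P_{0,n}(1,q^2)$ equals a power of $q$ times an element of $\Z[q;1/q]$; the term $P_{0,n}(1,1/q^2)$ reduces to the previous one, up to a power of $q$, via the functional equation \eqref{relationcj} (equivalently \eqref{relationdj}), and $P_{1,n}(1,q^2)$ is the pole-free case $s=1$. Bounding the degree in $q$ by Lemma~4.4 as above and multiplying by $q^{\lfloor\alpha n^2+\beta n+\gamma\rfloor}$ then gives \eqref{denominatp0chap}, the rational factor $\tfrac12$ being cleared once one multiplies by $(A-1)!$ in $D_n$.

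Finally, \eqref{DenominateurcommunDn} follows by taking the common multiple: $(A-1)!$ clears the factorials $(s-1)!$ and the $\tfrac12$; $d_{2n}(1/q)^{A-1}$ dominates each $d_n(1/q^2)^{A-j}$ with $j\ge 1$ (since $d_n(1/q^2)\mid d_{2n}(1/q)$); $\Delta_n(1/q)$ absorbs the odd-pole contributions; and $\varphi_n(1/q)^{2r}$ together with $q^{\lfloor\alpha n^2+\beta n+\gamma\rfloor}$ are common to all the estimates above; hence $D_n(q)\hat P_{j,n}(q^2)\in\Z[1/q]$ for every $j\in\{0,2,\dots,A-1\}$. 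The step I expect to be the real obstacle is the cyclotomic bookkeeping in the case $j=0$: one must verify, index by index, that passing from $q$ to $q^2$ turns both the poles $(1-q^{2k-1})^s$ and the Lemma~4.3 denominator $d_n(1/q^2)^{A-s}$ into something still swallowed by the single combination $d_{2n}(1/q)^{A-1}\Delta_n(1/q)$, rather than by the cruder $\Delta_n(1/q)^A$, since it is this sharper denominator that makes the asymptotics of $D_n$ (Lemma~5.4) strong enough for Theorem~1.1. The degree estimates via Lemma~4.4 are then routine, but have to be carried through each $P_{s,n}$.
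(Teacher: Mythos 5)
Your proof follows the same route as the paper's: you combine Lemma~4.3 and Lemma~4.4 to control both the denominators and the degrees of the $c_{s,j,n}(q^2)$, unwind \eqref{Alpha}, \eqref{Pjchap} and \eqref{Pj} for $j\geq 2$ (using $s\geq j$ to pass from $d_n(1/q^2)^{A-s}$ to $d_n(1/q^2)^{A-j}$), and for $j=0$ reduce the odd poles $(1-q^{2k-1})^s$ to odd cyclotomic factors and verify, via the factorization of $d_n(x^2)$, that $d_{2n}(x)^{A-1}\Delta_n(x)$ is a common multiple of the $\Delta_n(x)^s d_n(x^2)^{A-s}$ --- which is exactly the lcm identity the paper invokes, with the $q$-power then fixed by the degree bounds. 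The only (immaterial) divergence is that you clear the factor $\tfrac12$ in $\hat P_{0,n}$ using the $(A-1)!$ present in $D_n(q)$, so your argument gives \eqref{denominatp0chap} only up to a factor of $2$, but this still yields the lemma's final conclusion and everything downstream.
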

\begin{proof}
As $\lim_{q\to+\infty}d_n(1/q^2)=d_n(0)=\pm1$ and $\lim_{q\to+\infty}\phi_t(1/q)=\phi_t(0)=\pm1$, the previous Lemma implies that for all
$(s,j)\in\{1,\dots,A\}\times\{0,\dots,n\}$ we have:
$$\lim_{q\to+\infty}q^{\alpha n^2+\beta'
n+\gamma'}\varphi_n(1/q)^{2r}\,d_n(1/q^2)^{A-s}c_{s,j,n}(q^2)<\infty.$$
 By using Lemma~4.3 we then obtain that for $\alpha=-A/4-r^2$ there exist
real numbers $\beta'$ and $\gamma'$ depending only on $A$ and $r$, such that for all $ (s,j)\in\{1,\dots,A\}\times\{0,\dots,n\}$:
\begin{equation}\label{100}
q^{\lfloor\alpha n^2+\beta'
n+\gamma'\rfloor}\varphi_n(1/q)^{2r}\,d_n\left(1/q^2\right)^{A-s}c_{s,j,n}(q^2)\in\Z\left[\frac{1}{q}\right].
\end{equation}
From the expressions \eqref{Alpha}, (\ref{Pjchap}) and (\ref{Pj}), we deduce \eqref{denominatpjchap} (some easy computations show that the values $\beta=\beta'-A+1$ and $\gamma=\gamma'+A-2$ are convenient).

\medskip
Besides, the definition \eqref{deltanaveccyclot} shows that $\Delta_n(x)$ is nothing else but the lowest common multiple (lcm) of $1-x$, $1-x^3$,\dots,$1-x^{2n-1}$. Recall that $d_n(x)$ is the lcm of $1-x$, $1-x^2$,\dots,$1-x^n$. From the definition of lcm and equation \eqref{1000}, we deduce first
$$d_n(x^2)=\prod_{t=1}^n\phi_t(x)\times\prod_{t=n+1\atop t\,\mbox{\scriptsize{even}}}^{2n}\phi_t(x),$$
and then that $d_{2n}(x)^{A-1}\Delta_n(x)$ is the lcm of the polynomials $\Delta_n(x)^sd_n(x^2)^{A-s}$ when $s$ runs along $\{1,\dots,A\}$. This yields \eqref{denominatp0chap}, with the help of \eqref{100}, expressions (\ref{P0chap}), (\ref{P0}) and (\ref{Pj}), and equation \eqref{relationcj}.
\end{proof}

\section{Asymptotic estimates}

We now evaluate the asymptotics for $S_n(q)$, the
coefficients $\hat{P}_{j,n}(q)$ from (\ref{formelineaire}), and finally $D_n(q)$. Throughout this section, we fix an odd integer $A$ and $r\in\N^*$ such that $A-2r>0$.

\subsection{Asymptotic evaluation of $S_n(q)$}

\begin{lem}
For all $0<|q|<1$, we have:
$$\lim_{n\to+\infty}\frac{1}{n^2}\log|S_n(q)|=-\frac{1}{2}r(A-2r)\log|1/q|.$$
\end{lem}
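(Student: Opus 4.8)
The goal is to evaluate $\lim_{n\to\infty}\frac{1}{n^2}\log|S_n(q)|$ for $0<|q|<1$. The natural approach is to work directly with the defining sum \eqref{Sn} and to identify which term $k$ dominates asymptotically. Write $S_n(q)=(q)_n^{A-2r}\sum_{k\geq1}(-1)^{k+1}q^{(k-1/2)((A-2r)n/2+A/2-1)}(1-q^{2k+n-1})\dfrac{(q^{k-rn},q^{k+n})_{rn}}{(q^{k-1/2})_{n+1}^{A}}$. Since $|q|<1$, the prefactor $(q)_n^{A-2r}$ tends to a nonzero constant as $n\to\infty$ and contributes $0$ to the $\frac1{n^2}\log$ limit; similarly the factor $(1-q^{2k+n-1})$ is harmless. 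The whole content is therefore in tracking the exponent of $q$ in the general term as a quadratic function of $n$, for $k$ in the relevant range (I expect $k$ comparable to $n$, say $k=\lfloor xn\rfloor$ for $x\in[0,r]$, to be where the extremum occurs because of the factors $(q^{k-rn})_{rn}$ and $(q^{k-1/2})_{n+1}^A$ in the denominator).

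First I would compute, for each factor, the leading ($n^2$) coefficient of $\log|\cdot|$ when $k=xn$. The monomial $q^{(k-1/2)((A-2r)n/2+A/2-1)}$ contributes $\frac{x(A-2r)}{2}\,n^2\log|q|+o(n^2)$. For a product like $(q^{a};q)_m=\prod_{i=0}^{m-1}(1-q^{a+i})$ with $a,m$ linear in $n$, the leading term of $\log\bigl|(q^a;q)_m\bigr|$ is obtained by summing $\log|1-q^{a+i}|$: the factors with positive exponent $a+i\to+\infty$ contribute $o(n^2)$, while those with negative exponent $a+i<0$ contribute roughly $\sum (a+i)\log|q|$, a quadratic in $n$. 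Applying this to $(q^{k-rn};q)_{rn}$ (exponents run from $k-rn$ up to $k-1$; negative ones are those $i$ with $k-rn+i<0$), to $(q^{k+n};q)_{rn}$ (all exponents positive, so $o(n^2)$), and to the denominator $(q^{k-1/2};q)_{n+1}^A$ (exponents from $k-1/2$ to $k+n-1/2$; negative ones when $k<n+1/2$, counted with multiplicity $A$), I get a piecewise-quadratic function $g(x)$ of $x$ such that $\frac{1}{n^2}\log|S_n(q)|\to \max_{x\ge 0} g(x)\cdot\log|1/q|$ (maximum because $\log|q|<0$ flips the optimization, and because the alternating sign does not help beat the dominant term — one should check no cancellation, using e.g. that consecutive terms have ratio bounded away from $1$ near the extremum, or invoking the very-well-poised/hypergeometric closed form).

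The main obstacle, and the step requiring care, is the piecewise analysis of $g(x)$: the exponents $k-rn+i$, $k-1/2+i$ etc. change sign as $i$ ranges, so $g$ is assembled from several quadratic pieces on subintervals of $x\in[0,\infty)$ determined by the breakpoints $x=0$, $x=r$, $x=1$, and one must locate the global maximum among the pieces and the breakpoints. I expect the optimum to sit at an interior point where two pieces meet or where a quadratic piece has a critical point, and the arithmetic should collapse to the clean value $-\frac12 r(A-2r)$. A cleaner alternative I would try in parallel: use the very-well-poised ${}_{A+4}\phi_{A+3}$ representation displayed in the excerpt together with a $q$-integral / Cauchy-formula contour estimate (the paper announces "we study the asymptotics of $S_n(q)$ ... through Cauchy's formula"), writing $S_n(q)$ as a contour integral of a product of $q$-Gamma-type factors, applying the saddle-point method, and reading off the exponential rate; this avoids the bookkeeping of sign changes at the cost of setting up the integral representation. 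Either way, once the leading coefficient is identified the limit follows immediately, and the factors contributing only $o(n^2)$ (the $(q)_n$ powers, the $1-q^{2k+n-1}$ term, and all $q$-shifted factorials with eventually-positive exponents) are dispatched routinely.
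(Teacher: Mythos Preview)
Your plan misses the decisive simplification that makes this lemma almost trivial: the numerator factor $(q^{k-rn};q)_{rn}=\prod_{i=0}^{rn-1}(1-q^{k-rn+i})$ \emph{vanishes identically} for every $k\in\{1,\dots,rn\}$, since then $rn-k\in\{0,\dots,rn-1\}$ picks out the factor $1-q^0=0$. Hence the sum defining $S_n(q)$ really starts at $k=rn+1$, and your proposed range $x\in[0,r]$ for $k=xn$ is exactly the range where all terms are zero. There is no interior saddle to find.

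Once one sees this, the paper's argument is a two-line ratio test: for $k\geq rn+1$ one computes $\rho_{k+1}/\rho_k$ for the summand $\rho_k$ and checks that $|\rho_{k+1}/\rho_k|\leq |q|^{(A-2r)n/2}\cdot C(q)$ uniformly in $k$, hence $<1/3$ for $n$ large. This immediately gives $\tfrac12|\rho_{rn+1}|\leq |S_n|\leq\tfrac32|\rho_{rn+1}|$, disposing of the alternating-sign cancellation issue you flagged, and the limit drops out from the explicit form of $\rho_{rn+1}$. No piecewise quadratic optimization or saddle-point analysis is needed.

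Two smaller slips in your plan: the denominator $(q^{k-1/2})_{n+1}$ has exponents $k-\tfrac12,\dots,k+n-\tfrac12$, all positive for $k\geq1$, so there are no ``negative-exponent'' contributions there; and the paper's use of Cauchy's formula is for bounding the coefficients $\hat P_{j,n}$, not for $S_n$ itself.
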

\begin{proof}
Set $\rho_k(q):=q^k(1-q^{2k+n-1})R_n(q^k;q)$, so that
$S_n(q)=\sum_{k\geq 1}\rho_k(q)$. By the definition \eqref{Rn}
 of $R_n(T;q)$, it is clear that
$\rho_k(q)=0\Leftrightarrow k\in\{0,\dots,rn\}$. Moreover we have for $k\geq rn+1$:
\begin{multline*}
\frac{\rho_{k+1}(q)}{\rho_k(q)}=q^{(A-2r)n/2+A/2-1}\frac{1-q^{2k+n+1}}{1-q^{2k+n-1}}\frac{1-q^{k}}{1-q^{k-rn}}\\
\times\frac{1-q^{k+n+rn}}{1-q^{k+n}}\left(\frac{1-q^{k-1/2}}{1-q^{k+n-1/2}}\right)^{A+1}.
\end{multline*}
Then, as $A-2r>0$, $0<|q|<1$ and $k\geq rn+1$, we have for a sufficiently large $n$ the following upper bound,  uniformly in $k$:
$$\left|\frac{\rho_{k+1}(q)}{\rho_k(q)}\right|\leq|q|^{(A-2r)n/2}\left(\frac{1+|q|}{1-|q|}\right)^{A+3}<\frac{1}{3},$$
which yields, as in \cite{KRZ} and \cite{JM}, the following inequalities:
$$\frac{1}{2}|\rho_{rn+1}(q)|\leq|S_n(q)|\leq\frac{3}{2}|\rho_{rn+1}(q)|.$$
Besides,
\begin{multline*}
\rho_{rn+1}(q)=(-1)^{rn+2}q^{(rn+1/2)((A-2r)n/2+A/2-1)}\\
\times(1-q^{2rn+n+1})(q)_n^{A-2r}\frac{(q,q^{(r+1)n+1})_{rn}}{(q^{rn+1/2})_{n+1}^{A}},
\end{multline*}
therefore we get:
$$\lim_{n\to+\infty}\frac{1}{n^2}\log|S_n(q)|=\lim_{n\to+\infty}\frac{1}{n^2}\log|\rho_{rn+1}(q)|=-\frac{1}{2}r(A-2r)\log|1/q|.$$
\end{proof}

\subsection{Asymptotic evaluation of the coefficients $\hat{P}_{j,n}(q)$ of (\ref{formelineaire})}

\begin{lem}
For all $j\in\{0,2,4,\dots,A-1\}$ and $0<|q|<1$, we have:
$$\limsup_{{n\to+\infty\atop n\,\mbox{\scriptsize{odd}}}}\frac{1}{n^2}\log|\hat{P}_{j,n}(q)|\leq
\frac{1}{8}(A+4r^2)\log|1/q|.$$
\end{lem}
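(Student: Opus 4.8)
The plan is to estimate each coefficient $\hat{P}_{j,n}(q)$ via the integral representations $P_{s,n}(1,q)$ and $P_{0,n}(1,q)$ from \eqref{P0chap}--\eqref{Pj}, which in turn are built from the partial-fraction coefficients $c_{s,j,n}(q)$ (equivalently $d_{s,j,n}(q)$). Since \eqref{Pjchap} and \eqref{P0chap} express $\hat{P}_{j,n}(q)$ as a \emph{finite} $\Q$-linear combination (with coefficients independent of $n$) of the $P_{s,n}(1,q)$, and each $P_{s,n}(1,q)$ is a sum of $O(n)$ terms of the form $d_{s,j,n}(q)$ (times bounded powers of $q$), it suffices to prove
$$\limsup_{n\to+\infty}\frac1{n^2}\log\max_{1\le s\le A,\;0\le j\le n}\big|c_{s,j,n}(q)\big|\;\le\;\frac18(A+4r^2)\log|1/q|,$$
after replacing $q$ by $q^2$ and halving (the factor $2$ in $A+4r^2$ versus the $S_n$-estimate is exactly this $q\mapsto q^2$). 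The polynomial growth in $n$ of the number of summands, and bounded powers of $q$, contribute nothing to the $\frac1{n^2}\log$ limit.

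The heart of the matter is thus the asymptotics of $c_{s,j,n}(q)$, uniformly in $(s,j)$. Here I would use exactly the machinery already set up in the proof of Lemma~4.4: the formula \eqref{crecrit} for $c_{s,j,n}(q)$ in terms of the $(A-s)$-th $T$-derivative of $V_n(T;q)$ at $T=q^{1/2-j}$, combined with Fa\`a di Bruno's formula \eqref{FaaVn} and the explicit logarithmic derivative $v_n(T;q)$. Evaluating $V_n(q^{1/2-j};q)$ and the bracketed products at $T=q^{1/2-j}$ gives, to leading order in $n$, a quantity whose $\frac1{n^2}\log|\cdot|$ depends only on $j/n$; one optimizes the resulting quadratic in the continuous parameter $x=j/n\in[0,1]$. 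The dominant contribution to $\log|V_n(q^{1/2-j};q)|$ comes from the product $(q)_n^{A-2r}$, the Pochhammer factors $(q^{-rn}T,q^nT)_{rn}$, and the $A$ denominator factors $(T-q^{1/2-i})$; after substituting $T=q^{1/2-j}$ and collecting the dominant powers of $q$, the maximum over $x\in[0,1]$ should come out to $\tfrac14(A+4r^2)\log|1/q|$ for the base-$q$ version, hence $\tfrac18(A+4r^2)\log|1/q|$ after $q\mapsto q^2$. The contribution of the Fa\`a di Bruno product, being governed by the $v_n$ estimates already established in Lemma~4.4, is of strictly lower order (at most linear in $n$ after taking logarithms), so it does not affect the $n^2$-term.

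I would carry this out in the following order: (i) reduce, as above, the estimate for $\hat{P}_{j,n}$ to a uniform estimate for $c_{s,j,n}(q^2)$; (ii) recall \eqref{crecrit}, \eqref{V} and the Fa\`a di Bruno expansion, and observe that only the factor $V_n(q^{1/2-j};q)$ itself matters to leading order; (iii) compute $\lim_{n\to\infty}\frac1{n^2}\log|V_n(q^{1/2-j};q)|$ as a function of $x=j/n$ using the standard estimate $\frac1{n^2}\log|(q^{a n+b})_{cn}|\to$ (an explicit quadratic in $a,c$) times $\log|q|$ for $0<|q|<1$; (iv) maximize the resulting expression over $x\in[0,1]$, checking that the maximum equals $\tfrac14(A+4r^2)\log|1/q|$; (v) replace $q$ by $q^2$ to conclude. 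The main obstacle I anticipate is step (iv): one must handle the absolute values carefully (the Pochhammer products $(q^{-rn}T,\ldots)$ involve both positive and negative exponents of $q$, so $\log|q^{-rn}T\cdots|$ is a piecewise-linear/quadratic function of $x$), and verify that the unconstrained critical point of the quadratic indeed lies in $[0,1]$ and yields precisely the claimed constant — this is the same kind of optimization that pins down the $\frac18(A+4r^2)$ rather than a larger value, and getting the bookkeeping of all the $q$-powers right is the delicate part.
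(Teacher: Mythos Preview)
Your reduction to a uniform bound on the $d_{s,j,n}(q)$ is exactly right, and your Fa\`a di Bruno strategy is in principle workable, but it diverges from the paper's route, which is more direct. Instead of differentiating $V_n$ and then arguing that the $v_n$-factors contribute only polynomially in $n$, the paper applies Cauchy's formula to \eqref{c2}, writing
$$d_{s,j,n}(q)=-\frac{1}{2i\pi}\int_{\mathcal C}R_n(Tq^{1/2-j};q)\,(1-T)^{s-1}\,dT$$
over the circle $\mathcal C$ of fixed radius $\eta=(1-|q|)/2$ about $T=1$. After an elementary rearrangement the integrand splits as an explicit power of $q$, namely $q^{Aj^2/2-Anj/2+2j-(rn)^2/2-An-1/2}$, times a product of shifted $q$-Pochhammer symbols $(q^aT^{\pm1})_b$ each of which is bounded above and below on $\mathcal C$ by constants depending only on $|q|$ and $\eta$ (not on $n$, $j$, $s$). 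Minimizing the exponent over $j\in\{0,\dots,n\}$ --- the quadratic in $j$ is minimal near $j=n/2$ --- gives leading term $-(A+4r^2)n^2/8$, and the bound follows at once.

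What the Cauchy integral buys over your approach is precisely that the contour stays a uniform distance from every pole of $R_n$, so no separate argument is needed to show the derivative contributions are lower order. Your route would still go through (the sums defining $v_n(q^{1/2-j};q)$ and its derivatives are indeed $O(n)$ uniformly in $j$, so the Fa\`a di Bruno product is polynomial in $n$), and your step~(iv) optimization is essentially the same quadratic the paper minimizes; but you would have to redo the asymptotics of each Pochhammer block for fixed $|q|<1$ rather than borrow them from Lemma~4.4, whose estimates concern the $q\to\infty$ regime and do not transfer directly.

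One correction: there is no ``$q\mapsto q^2$ and halving'' step. The lemma is stated for $\hat P_{j,n}(q)$, built from the $d_{s,j,n}(q)$ themselves, and the optimization already produces the constant $\tfrac18(A+4r^2)$ directly. The replacement $q\mapsto q^2$ happens only later, when the estimate is fed into Nesterenko's criterion via the linear form $S_n(q^2)$.
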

\begin{proof}
We assume from now on that $n$ is an odd positive integer. First note that for any complex numbers $a_{i,n}$
($0\leq i\leq n$):
$$\left(\forall
i\in\{0,\dots,n\},\,\limsup_{n\to+\infty}\frac{1}{n^2}\log|a_{i,n}|\leq
c\right)\,\Rightarrow\limsup_{n\to+\infty}\frac{1}{n^2}\log\left|\sum_{i=0}^na_{i,n}\right|\leq
c.$$
This shows, via the definition of the coefficients $\hat{P}_{j,n}(q)$ given by
(\ref{P0chap})-(\ref{Pj}), that it is enough to prove the inequality of the Lemma for the coefficients $d_{s,j,n}(q)=(-1)^sq^{(j-1/2)s}c_{s,j,n}(q)$,
uniformly in $j$ and $s$. We fix the integer $j\in\{0,\dots,n\}$ and
$\eta=(1-|q|)/2>0$. Cauchy's formula applied to (\ref{c}) gives:
$$d_{s,j,n}(q)=-\frac{1}{2i\pi}\int_{\cal
C}R_n(Tq^{1/2-j};q)(1-T)^{s-1}dT,$$ where ${\cal C}$ is the circle of center 1 and radius $\eta$. Back to the expression (\ref{Rn}), we get:
\begin{multline*}
R_n(Tq^{1/2-j};q)(1-T)^{s-1}=q^{-j((A-2r)n/2+A/2-2)-An-1/2}T^{(A-2r)n/2+A/2-2}\\
\times(q)_n^{A-2r}(1-T)^{s-1}\frac{(q^{-rn-j+1/2}T,q^{n-j+1/2}T)_{rn}}{(Tq^{-j})_{n+1}^A}\cdot
\end{multline*}
After some elementary manipulations, we can deduce:
\begin{multline*}
R_n(Tq^{1/2-j};q)(1-T)^{s-1}=q^{Aj^2/2-Anj/2+2j-(rn)^2/2-An-1/2}T^{A(n-2j)/2+A/2-2}\\
\times(-1)^{Aj+rn}(q)_n^{A-2r}(1-T)^{s-A-1}\frac{(q^{j+1/2}/T,q^{n-j+1/2}T)_{rn}}{(q/T)_{j}^A(qT)_{n-j}^A}\cdot
\end{multline*}
In order to find an upper bound to this expression for $T\in{\cal C}$, we use the following inequalities from \cite{KRZ}, valid for
$(a,b)\in\N^*\times\N$,  $T\in{\cal C}$ and $\eta=(1-|q|)/2$:
$$0<(|q|(1+\eta);|q|)_\infty\leq|(q^aT)_b|\leq(-(1+\eta);|q|)_\infty,$$
$$0<(|q|/(1-\eta);|q|)_\infty\leq|(q^a/T)_b|\leq(-1/(1-\eta);|q|)_\infty,$$
$$|T^{A(n-2j)/2+A/2-2}|\leq (\max(1+\eta;1/(1-\eta))^{An/2}(1+\eta)^{A/2-2},$$
$$|(q)_n|\leq(-|q|;|q|)_\infty\;\;\mbox{and}\;\;|1-T|^{s-A-1}\leq1/\eta^{A+1}.$$

\medskip
\noindent 
It remains to find a lower bound for the power of $q$ in the previous expression of $R_n(Tq^{1/2-j};q)(1-T)^{s-1}$. This can be done by noting that the function $j\mapsto
Aj^2/2-Anj/2+2j-(rn)^2/2-An-1/2$ is minimal at $j=n/2-2/A$, and this minimal value is  equal to $-An^2/8-r^2n^2/2+\la
n+\mu$, where $\lambda$ and $\mu$ are real numbers depending only on $A$ and $r$. All this yields to the following:
$$|d_{s,j,n}(q)|\leq c_0\times|q|^{-(A+4r^2)n^2/8},$$
where $c_0$ does not depend on $j$ neither $s$, and satisfies
$\displaystyle\lim_{n\to+\infty}c_0^{\;1/n^2}=1$, and we can conclude.
\end{proof}

\subsection{Asymptotic evaluation of $D_n(q)$ defined by (\ref{DenominateurcommunDn})}

We first prove a preliminary result:
\begin{lem} For any positive integer $n$, we have:
\begin{eqnarray}
\sum_{1\leq d\leq n\atop d\,\mbox{\scriptsize{odd}}}\frac{\mu(d)}{d^2}&=&\frac{8}{\pi^2}+\mbox{O}(1/n)\label{muimpair},\\
\sum_{1\leq d\leq n\atop d\,\mbox{\scriptsize{even}}}\frac{\mu(d)}{d^2}&=&-\frac{2}{\pi^2}+\mbox{O}(1/n)\label{mupair},
\end{eqnarray}
where $\mu$ is the M\"obius function.
\end{lem}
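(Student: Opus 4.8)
The statement to prove is Lemma 5.3: the two asymptotic formulas
\[
\sum_{1\leq d\leq n,\ d\ \text{odd}}\frac{\mu(d)}{d^2}=\frac{8}{\pi^2}+O(1/n),\qquad
\sum_{1\leq d\leq n,\ d\ \text{even}}\frac{\mu(d)}{d^2}=-\frac{2}{\pi^2}+O(1/n).
\]

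The plan is to reduce both statements to the classical fact that $\sum_{d\geq 1}\mu(d)/d^2=1/\zeta(2)=6/\pi^2$, together with the standard tail estimate $\sum_{d>n}\mu(d)/d^2 = O(1/n)$, which follows from $\sum_{d>n}1/d^2 = O(1/n)$. First I would observe that, since $\mu$ is multiplicative and $\mu(p^2)=0$, the Euler product for the full series factors over the prime $2$: writing $d=2^a m$ with $m$ odd, only $a\in\{0,1\}$ contribute, giving
\[
\sum_{d\geq 1}\frac{\mu(d)}{d^2}=\Bigl(1+\frac{\mu(2)}{4}\Bigr)\sum_{m\geq 1,\ m\ \text{odd}}\frac{\mu(m)}{m^2}=\frac{3}{4}\sum_{m\ \text{odd}}\frac{\mu(m)}{m^2}.
\]
Hence $\sum_{m\ \text{odd}}\mu(m)/m^2 = \frac{4}{3}\cdot\frac{6}{\pi^2}=\frac{8}{\pi^2}$, and then $\sum_{m\ \text{even}}\mu(m)/m^2 = \frac{6}{\pi^2}-\frac{8}{\pi^2}=-\frac{2}{\pi^2}$. (Alternatively, the even sum is $\sum_{m\ \text{odd}}\mu(2m)/(2m)^2 = -\frac14\sum_{m\ \text{odd}}\mu(m)/m^2$, recovering the same value.) This pins down the two constants.

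Next I would control the truncation error. For the odd sum,
\[
\Bigl|\sum_{m\leq n,\ m\ \text{odd}}\frac{\mu(m)}{m^2}-\frac{8}{\pi^2}\Bigr|
=\Bigl|\sum_{m>n,\ m\ \text{odd}}\frac{\mu(m)}{m^2}\Bigr|
\leq\sum_{m>n}\frac{1}{m^2}\leq\int_n^\infty\frac{dt}{t^2}=\frac1n,
\]
which is $O(1/n)$; the even sum is handled identically, bounding $|\mu(m)|\leq 1$ and using the same tail integral. Combining the exact evaluation of the limits with these tail bounds gives both displayed asymptotics.

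The only mild subtlety is justifying the Euler-product factorization (equivalently, the rearrangement $\sum_d \mu(d)/d^2 = \sum_{a\geq 0}\sum_{m\ \text{odd}}\mu(2^a m)/(2^a m)^2$), which is legitimate because the series converges absolutely; there is no real obstacle here. Everything else is routine, so I would keep the write-up short: state $\sum \mu(d)/d^2 = 6/\pi^2$, extract the odd and even partial sums of the infinite series by the parity splitting above, and finish with the one-line tail estimate.
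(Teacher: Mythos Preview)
Your proof is correct and follows essentially the same route as the paper: both split the full series $\sum_{d\ge1}\mu(d)/d^2=6/\pi^2$ according to the parity of $d$ using $\mu(2m)=-\mu(m)$ for odd $m$ and $\mu(2m)=0$ for even $m$, obtain the constants $8/\pi^2$ and $-2/\pi^2$, and then pass to the truncated sums via the trivial tail bound. If anything, you are slightly more explicit than the paper, which simply says the $O(1/n)$ error is ``immediately deduced'' from the infinite-series identities.
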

\begin{proof}
Recall
$$
\frac{6}{\pi^2}=\sum_{d\geq1}\frac{\mu(d)}{d^2}=\sum_{d\geq1}\frac{\mu(2d)}{4d^2}+\sum_{d\geq1\atop d\,\mbox{\scriptsize{odd}}}\frac{\mu(d)}{d^2}.
$$
Besides $\mu(2d)=-\mu(d)$ if $d$ is odd and $\mu(2d)=0$ if $d$ is even, so we get:
\begin{equation}\label{muimp}
\sum_{d\geq1\atop d\,\mbox{\scriptsize{odd}}}\frac{\mu(d)}{d^2}=\frac{8}{\pi^2},
\end{equation} 
and this implies
\begin{equation}\label{mup}
\sum_{d\geq1\atop d\,\mbox{\scriptsize{even}}}\frac{\mu(d)}{d^2}=-\frac{2}{\pi^2}\cdot
\end{equation} 
Then we immediately deduce \eqref{muimpair} (resp. \eqref{mupair}) from \eqref{muimp} (resp. \eqref{mup}). 
\end{proof}
Now we can prove the following Lemma:
\begin{lem}
For all $0<|q|<1$ we have:
\begin{eqnarray}
\lim_{n\to+\infty}\frac{1}{n^2}\log|\Delta_n(1/q)|&=&\frac{8}{\pi^2}\log|1/q|\label{asyptdelta},\\
\lim_{n\to+\infty}\frac{1}{n^2}\log|\varphi_n(1/q)|&=&\frac{2}{3}\log|1/q|\label{asymptphi}.
\end{eqnarray}
\end{lem}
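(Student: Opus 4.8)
The plan is to reduce both limits to two ingredients: the asymptotics of the degrees $\deg\Delta_n$ and $\deg\varphi_n$, and the fact that evaluating a \emph{monic} product of cyclotomic polynomials at $1/q$ rather than at a formal variable changes the modulus only by a factor whose logarithm is $o(n^2)$. Indeed, write $P_n$ for either $\Delta_n$ or $\varphi_n$; each is a monic product $P_n(x)=\prod_t\phi_t(x)^{e_{t,n}}$ of cyclotomic polynomials. Because $\phi_t$ is self-reciprocal for $t\geq2$ (so $\phi_t(1/q)=q^{-\deg\phi_t}\phi_t(q)$) and $\phi_1(1/q)=-q^{-1}\phi_1(q)$, one has $|\phi_t(1/q)|=|1/q|^{\deg\phi_t}|\phi_t(q)|$ for every $t\geq1$, whence
$$\log|P_n(1/q)|=(\deg P_n)\log|1/q|+E_n(q),\qquad E_n(q):=\sum_t e_{t,n}\log|\phi_t(q)|.$$
So it is enough to show that $\deg\Delta_n/n^2\to8/\pi^2$, that $\deg\varphi_n/n^2\to2/3$, and that $E_n(q)=o(n^2)$ in both cases; then \eqref{asyptdelta} and \eqref{asymptphi} follow upon dividing by $n^2$ and letting $n\to+\infty$.

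Both tasks rest on rewriting $\Delta_n(q)$ and $\varphi_n(q)$ as products of simple factors $q^m-1$ by M\"obius-inverting \eqref{1000}. As the odd positive integers form a submonoid under multiplication, inverting $q^m-1=\prod_{d\mid m}\phi_d(q)$ over odd integers yields $\Delta_n(q)=\prod_{t\leq2n-1,\,t\text{ odd}}\phi_t(q)=\prod_{d\leq2n-1,\,d\text{ odd}}(q^d-1)^{M(d)}$, where $M(d):=\sum_{e\leq(2n-1)/d,\,e\text{ odd}}\mu(e)$. For $\varphi_n$, from \eqref{phiprod} we have $\varphi_n(x)=\prod_{k=1}^n\phi_{2k}(x)^{\lfloor n/k\rfloor}$; using $\lfloor n/k\rfloor=\#\{i\leq n:k\mid i\}$ and interchanging, $\varphi_n(q)=\prod_{i=1}^n\prod_{k\mid i}\phi_{2k}(q)$, and for fixed $i$ the set $\{2k:k\mid i\}$ is exactly the set of even divisors of $2i$ while the odd divisors of $2i$ are the divisors of the odd part $i'$ of $i$; hence $\prod_{k\mid i}\phi_{2k}(q)=(q^{2i}-1)/(q^{i'}-1)$ and $\varphi_n(q)=\prod_{i=1}^n(q^{2i}-1)/(q^{i'}-1)$.

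Given these representations, the error terms are controlled by the exponential decay $|\log|q^d-1||=|\log|1-q^d||\leq-\log(1-|q|^d)\leq|q|^d/(1-|q|)$. For $P_n=\Delta_n$, the crude bound $|M(d)|=O(n/d)$ gives $|E_n(q)|\leq O(n)\sum_{d\geq1}|q|^d/d=O(n)$. For $P_n=\varphi_n$, the product formula gives $E_n(q)=\sum_{i=1}^n(\log|1-q^{2i}|-\log|1-q^{i'}|)$; the first sum is $O(1)$ because $\sum_i|q|^{2i}<\infty$, and in the second each odd value $m$ is the odd part of at most $\lfloor\log_2 n\rfloor+1$ integers $i\leq n$, so it is $O(\log n)$. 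In both cases $E_n(q)=o(n^2)$. For the degrees: since $i'\mid2i$, the factor $(q^{2i}-1)/(q^{i'}-1)$ is a polynomial of degree $2i-i'$, so $\deg\varphi_n=\sum_{i=1}^n(2i-i')=n(n+1)-\sum_{i\leq n}i'$; grouping integers $\leq n$ by their odd part, $\sum_{i\leq n}i'=\sum_{b\geq0}\sum_{m\leq n/2^b,\,m\text{ odd}}m=\sum_{b\geq0}\big(\tfrac14(n/2^b)^2+O(n/2^b)\big)=\tfrac13n^2+O(n)$, whence $\deg\varphi_n=\tfrac23n^2+O(n)$. And $\deg\Delta_n=\sum_{t\leq2n-1,\,t\text{ odd}}\deg\phi_t$; using $\deg\phi_m=\sum_{d\mid m}\mu(m/d)d$ (M\"obius inversion of the degrees in \eqref{1000}) and interchanging over odd integers, $\deg\Delta_n=\sum_{e\leq2n-1,\,e\text{ odd}}\mu(e)\sum_{d\leq(2n-1)/e,\,d\text{ odd}}d=\tfrac14(2n-1)^2\sum_{e\leq2n-1,\,e\text{ odd}}\frac{\mu(e)}{e^2}+O(n\log n)$, and by \eqref{muimpair} of Lemma~5.3 the last sum equals $8/\pi^2+O(1/n)$, so $\deg\Delta_n=8n^2/\pi^2+o(n^2)$.

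The main obstacle is the term $E_n(q)$: the naive estimate $|\log|\phi_t(q)||\leq(\deg\phi_t)\max(|\log(1-|q|)|,\log(1+|q|))$ only gives $|E_n(q)|=O(\sum_t e_{t,n}\deg\phi_t)=O(\deg P_n)=O(n^2)$, which is useless against the $\log|1/q|$-term, so one genuinely needs the telescoping/M\"obius structure above (equivalently, the fact that $|q^d-1|$ differs from $1$ only by $e^{O(|q|^d)}$) to bring it down to $O(n\log n)$. A secondary subtlety is that one must \emph{not} replace $\lfloor n/k\rfloor$ by $n/k$ inside $\sum_k\lfloor n/k\rfloor\deg\phi_{2k}$ when computing $\deg\varphi_n$, since the floor error there is weighted by the unbounded quantity $\deg\phi_{2k}$; the product formula (equivalently the decomposition by odd part) is precisely what sidesteps this.
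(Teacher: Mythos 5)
Your proof is correct, and it reorganizes the argument in a way that differs genuinely from the paper's, most visibly for $\varphi_n$. The paper works throughout with $\log|\Delta_n(x)|$ and $\log|\varphi_n(x)|$ for $|x|>1$, M\"obius-inverting \eqref{1000} at the level of logarithms and extracting the main term from $\log|x^l-1|=l\log|x|+\log|1-x^{-l}|$; for $\varphi_n$ this forces a double M\"obius sum (the function $u_n$ in the paper) and requires \emph{both} identities of Lemma~5.3, the constant $2/3$ arising as $\tfrac{\pi^2}{3}\cdot\tfrac{-2}{\pi^2}+\tfrac{\pi^2}{6}\cdot\tfrac{8}{\pi^2}$. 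Your reduction via self-reciprocity of cyclotomic polynomials cleanly isolates the main term as $(\deg P_n)\log|1/q|$ and turns the error into $\log|P_n(q)|$ with $|q|<1$, which the telescoping product representations control by geometric decay; for $\Delta_n$ this is essentially the paper's computation repackaged (the same M\"obius sum $\sum_{e\,\mathrm{odd}}\mu(e)/e^2$ reappears in $\deg\Delta_n$), but for $\varphi_n$ your identity $\varphi_n(q)=\prod_{i=1}^n(q^{2i}-1)/(q^{i'}-1)$ and the odd-part computation of $\deg\varphi_n=\sum_i(2i-i')$ avoid M\"obius inversion and the identity $\sum_{d\,\mathrm{even}}\mu(d)/d^2=-2/\pi^2$ altogether, yielding the constant $2/3$ by an elementary geometric-series count. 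What the paper's route buys is uniformity of method across both limits; what yours buys is a cleaner separation of the arithmetic main term (a degree count, independent of $q$) from the analytic error, and a more elementary derivation of \eqref{asymptphi}. Your closing remark about not replacing $\lfloor n/k\rfloor$ by $n/k$ against the unbounded weights $\deg\phi_{2k}$ correctly identifies the pitfall that both arguments are designed to sidestep.
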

\begin{proof}
M\"obius inversion formula applied to \eqref{1000} yields for $k\in\N^*$:
\begin{equation}\label{moebius}
\log|\phi_k(x)|=\sum_{d|k}\mu(d)\log|x^{k/d}-1|.
\end{equation}

First recall the definition \eqref{deltanaveccyclot}:
$$\Delta_n(x)=\prod_{k=1\atop k\,\mbox{\scriptsize{odd}}}^{2n-1}\phi_k(x).$$
Thus, by using \eqref{moebius}, we can write for $|x|>1$:
\begin{eqnarray*}
\log|\Delta_n(x)|&=&\sum_{k=1\atop k\,\mbox{\scriptsize{odd}}}^{2n}\sum_{d|k}\mu(d)\log|x^{k/d}-1|=\sum_{1\leq d\leq2n\atop d\,\mbox{\scriptsize{odd}}}\mu(d)\sum_{1\leq l\leq2n/d\atop l\,\mbox{\scriptsize{odd}}}\log|x^{l}-1|\\
&=&\sum_{1\leq d\leq2n\atop d\,\mbox{\scriptsize{odd}}}\mu(d)\sum_{1\leq l\leq2n/d\atop l\,\mbox{\scriptsize{odd}}}(l\log|x|+\log|1-x^{-l}|)\\
&=&\log|x|\sum_{1\leq d\leq2n\atop d\,\mbox{\scriptsize{odd}}}\mu(d)\sum_{1\leq l\leq2n/d\atop l\,\mbox{\scriptsize{odd}}}l+\mbox{O}\left(n\sum_{l=1}^{2n}\frac{1}{l}\right).
\end{eqnarray*}
Since $\displaystyle\sum_{1\leq l\leq2n/d\atop l\,\mbox{\scriptsize{odd}}}l=\frac{n^2}{d^2}+\mbox{O}(1)$ and $\displaystyle\mbox{O}\left(n\sum_{l=1}^{2n}\frac{1}{l}\right)=\mbox{O}(n\log n)$, we get:
$$\log|\Delta_n(x)|=n^2\log|x|\sum_{1\leq d\leq2n\atop d\,\mbox{\scriptsize{odd}}}\frac{\mu(d)}{d^2}+\mbox{O}(n\log n),$$
which yields \eqref{asyptdelta}, by setting $x=1/q$ and using \eqref{muimpair}.\\

\medskip
\noindent To prove \eqref{asymptphi}, recall the definition \eqref{phiprod}:
$$\varphi_n(x)=\phi_2(x)^n\phi_4(x)^{\left\lfloor n/2\right\rfloor}\dots\phi_{2n}(x).$$
We use again \eqref{moebius}, assuming $|x|>1$:
\begin{eqnarray*}
\log|\varphi_n(x)|&=&\sum_{k=1}^{n}\left\lfloor \frac{n}{k}\right\rfloor\sum_{d|2k}\mu(d)\log|x^{2k/d}-1|\\
&=&\sum_{d,l\geq1\atop dl\,\mbox{\scriptsize{even}}\leq2n}\mu(d)\left\lfloor \frac{2n}{dl}\right\rfloor\log|x^{l}-1|\\
&=&\sum_{1\leq d\leq2n\atop d\,\mbox{\scriptsize{even}}}\mu(d)\sum_{1\leq l\leq2n/d}\left\lfloor \frac{2n}{dl}\right\rfloor\log|x^{l}-1|\\
&&\hskip3cm+\sum_{1\leq d\leq2n\atop d\,\mbox{\scriptsize{odd}}}\mu(d)\sum_{1\leq l\leq2n/d\atop l\,\mbox{\scriptsize{even}}}\left\lfloor \frac{2n}{dl}\right\rfloor\log|x^{l}-1|.
\end{eqnarray*}
Hence, by setting $u_n(x):=\sum_{l=1}^n\left\lfloor \frac{n}{l}\right\rfloor\log|x^{l}-1|$, we have:
\begin{equation}\label{exprphin}
\log|\varphi_n(x)|=\sum_{1\leq d\leq2n\atop d\,\mbox{\scriptsize{even}}}\mu(d)u_{2n/d}(x)+\sum_{1\leq d\leq2n\atop d\,\mbox{\scriptsize{odd}}}\mu(d)u_{n/d}(x^2).
\end{equation}
Assuming $|x|>1$ we can write:
\begin{eqnarray*}
u_n(x)&=&\log|x|\sum_{l=1}^nl\left\lfloor \frac{n}{l}\right\rfloor+\sum_{l=1}^n\left\lfloor \frac{n}{l}\right\rfloor\log|1-x^{-l}|\\
&=&\log|x|\sum_{l=1}^nl\sum_{1\leq k\leq n/l}1+\sum_{l,k\geq 1\atop lk\leq n}\log|1-x^{-l}|\\
&=&\frac{\log|x|}{2}\sum_{k=1}^n\left\lfloor\frac{n}{k}\right\rfloor\left(\left\lfloor\frac{n}{k}\right\rfloor+1\right)+\mbox{O}\left(\sum_{l,k\geq 1\atop lk\leq n}1\right)\\
&=&\frac{\log|x|}{2}\sum_{k=1}^n\frac{n^2}{k^2}+\mbox{O}\left(n\log n\right)\\
&=&n^2\log|x|\times\frac{\pi^2}{12}+\mbox{O}\left(n\log n\right).
\end{eqnarray*}
Therefore, by using \eqref{exprphin}, \eqref{mupair} and \eqref{muimpair}, we deduce:
\begin{eqnarray*}
\log|\varphi_n(x)|&=&\sum_{1\leq d\leq2n\atop d\,\mbox{\scriptsize{even}}}\mu(d)\left(4\frac{n^2}{d^2}\log|x|\times\frac{\pi^2}{12}+\mbox{O}\left(\frac{2n}{d}\log \frac{2n}{d}\right)\right)\\
&&\hskip2cm+\sum_{1\leq d\leq2n\atop d\,\mbox{\scriptsize{odd}}}\mu(d)\left(2\frac{n^2}{d^2}\log|x|\times\frac{\pi^2}{12}+\mbox{O}\left(\frac{n}{d}\log \frac{n}{d}\right)\right)\\
&=&n^2\log|x|\times\frac{\pi^2}{3}\times\frac{-2}{\pi^2}+n^2\log|x|\times\frac{\pi^2}{6}\times\frac{8}{\pi^2}+\mbox{O}\left(n\log^2 n\right)\\
&=&\frac{2}{3}n^2\log|x|+\mbox{O}\left(n\log^2 n\right),
\end{eqnarray*}
which, by setting $x=1/q$, shows \eqref{asymptphi} as desired.
\end{proof}
Now we are able to prove the following result:
\begin{lem}
For all $0<|q|<1$ we have:
$$\lim_{n\to+\infty}\frac{1}{n^2}\log|D_n(q)|=\left(\frac{A}{4}+r^2+\frac{12}{\pi^2}(A-1)+\frac{4r}{3}+\frac{8}{\pi^2}\right)\log|1/q|.$$
\end{lem}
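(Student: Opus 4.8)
The plan is to take logarithms in the definition \eqref{DenominateurcommunDn} of $D_n(q)$ and to evaluate separately the contribution of each of its five factors to $\frac1{n^2}\log|D_n(q)|$, then add the five limits. Throughout, $0<|q|<1$, so $\log|q|=-\log|1/q|<0$, and the only non-elementary input is the asymptotic behaviour of the cyclotomic-type products $d_{2n}(1/q)$, $\Delta_n(1/q)$ and $\varphi_n(1/q)$.

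The constant $(A-1)!$ contributes $0$. Since $\lfloor\alpha n^2+\beta n+\gamma\rfloor=\alpha n^2+\mbox{O}(n)$ with $\alpha=-A/4-r^2$, the factor $q^{\lfloor\alpha n^2+\beta n+\gamma\rfloor}$ contributes $\lim\frac1{n^2}(\alpha n^2+\mbox{O}(n))\log|q|=-\alpha\log|1/q|=\left(\frac A4+r^2\right)\log|1/q|$. Next, Lemma~5.4 gives directly that $\varphi_n(1/q)^{2r}$ contributes $2r\cdot\frac23\log|1/q|=\frac{4r}{3}\log|1/q|$ and that $\Delta_n(1/q)$ contributes $\frac8{\pi^2}\log|1/q|$.

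It remains to handle $d_{2n}(1/q)^{A-1}$, which I would do by first establishing the auxiliary asymptotics $\frac1{m^2}\log|d_m(x)|\to\frac3{\pi^2}\log|x|$ for $|x|>1$, using the same M\"obius inversion already applied to $\Delta_n$ and $\varphi_n$ in Lemma~5.4. From \eqref{dnaveccyclot} and \eqref{moebius} one gets $\log|d_m(x)|=\sum_{d\le m}\mu(d)\sum_{l\le m/d}\log|x^l-1|$; since $\log|x^l-1|=l\log|x|+\mbox{O}(|x|^{-l})$ for $|x|>1$ and $\sum_{l\le m/d}l=\frac12(m/d)^2+\mbox{O}(m/d)$, this equals $\frac{m^2\log|x|}{2}\sum_{d\le m}\frac{\mu(d)}{d^2}+\mbox{O}(m\log m)$, whence the claim because $\sum_{d\ge1}\mu(d)/d^2=6/\pi^2$ (with partial-sum error $\mbox{O}(1/m)$, exactly as in Lemma~5.3). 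Taking $m=2n$ gives $\frac1{n^2}\log|d_{2n}(1/q)|\to 4\cdot\frac3{\pi^2}\log|1/q|=\frac{12}{\pi^2}\log|1/q|$, so $d_{2n}(1/q)^{A-1}$ contributes $\frac{12(A-1)}{\pi^2}\log|1/q|$.

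Adding the five contributions yields
$$\lim_{n\to+\infty}\frac1{n^2}\log|D_n(q)|=\left(\frac A4+r^2+\frac{4r}{3}+\frac{12(A-1)}{\pi^2}+\frac8{\pi^2}\right)\log|1/q|,$$
which is the asserted formula. The only point requiring genuine work is the asymptotics of $d_{2n}(1/q)$, but this is a direct analogue of the computations already carried out for $\Delta_n$ and $\varphi_n$ in Lemma~5.4, so no new obstacle arises; everything else is bookkeeping over the five factors.
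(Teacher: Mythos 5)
Your proof is correct and follows essentially the same route as the paper: split $\log|D_n(q)|$ over the factors in \eqref{DenominateurcommunDn}, use Lemma~5.4 for $\varphi_n$ and $\Delta_n$, and the $\frac{3}{\pi^2}\log|1/q|$ asymptotic for $d_n$. The only difference is that the paper simply cites this last estimate from the literature (\cite{BV}, \cite{VA}) as \eqref{dnasympt}, whereas you reprove it by the same M\"obius-inversion computation used in Lemma~5.4 --- a correct and self-contained variant of the same argument.
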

\begin{proof}
For $0<|q|<1$ recall the estimate (see \cite{BV} and \cite{VA}):
\begin{equation}\label{dnasympt}
\lim_{n\to+\infty}\frac{1}{n^2}\log|d_n(1/q)|=\frac{3}{\pi^2}\log|1/q|.
\end{equation} 
Thus, with the help of the expressions of $D_n(q)$ and $\alpha=-A/4-r^2$ given by \eqref{DenominateurcommunDn}, and by using the previous Lemma, we can easily conclude.
\end{proof}

\section{Concluding remarks}

\subsection{Link with Dirichlet's beta function}

As mentioned in the introduction, we now justify the term $q$-analogues of the values of Dirichlet's beta function at positive integers. To this aim, recall the \emph{Stirling numbers of the second kind} (see \cite{St}), which are integers denoted by $S(s,j)$ (where $s$ and $j$ are integers such that $1\leq j\leq s$) and defined by:
$$x^s=\sum_{j=1}^sS(s,j)x(x-1)\dots(x-j+1).$$
By expanding the summand in the first expression of \eqref{betaq}, we can write for all $s\geq2$ and $|q|<1$:
\begin{eqnarray}
\beta_q(s)&=&\sum_{k\geq1}k^{s-1}\sum_{m\geq0}(-1)^mq^{(2m+1)k}\nonumber\\
&=&\sum_{k,m\geq0}(k+1)^{s-1}(-1)^mq^{(2m+1)(k+1)}\nonumber\\
&=&\sum_{k,m\geq0}\sum_{j=1}^{s-1}q^{(2m+1)(k+1)}(-1)^{m+s-1-j}S(s-1,j)j!\bi{k+j}{j}\nonumber\\
&=&\sum_{j=1}^{s-1}(-1)^{s-1-j}S(s-1,j)j!\sum_{m\geq0}(-1)^{m}\frac{q^{2m+1}}{(1-q^{2m+1})^{j+1}}\nonumber\\
&=&\sum_{j=1}^{s-1}(-1)^{s-1-j}S(s-1,j)j!\,Y_{j+1}(q).\label{justifqanalog}
\end{eqnarray}
Note in passing that \eqref{justifqanalog} is the inverse expansion of \eqref{Ysbetaq}. Notice that our auxiliary functions $Y_s(q)$ are clearly $q$-analogues of the function $\beta$ at positive integers, since for $s\geq1$:
$$\lim_{q\to1}(1-q)^sY_s(q)=\beta(s).$$
This shows, by using \eqref{justifqanalog} and \eqref{betaq1}, that for all $s\geq1$:
\begin{equation}\label{qanal}
\lim_{q\to1}(1-q)^s\beta_q(s)=(s-1)!\beta(s).
\end{equation}

\subsection{Special emphazis on  $\beta_q(1)$ and $\beta_q(2)$}
For $s=1$, we have in fact:
$$\beta_q(1)=Y_1(q),$$
which, as mentioned in the introduction, is, up to constants, equal to $\pi_{q}$ whose irrationality exponent was studied in \cite{BZ, BZ2}.\\
 Now we inspect more carefully the link with Catalan's constant, which is defined by $G:=\sum_{k\geq0}(-1)^{k}/(2k+1)^2=\beta(2)$. The $q$-analogue of $G$ proposed at the end of \cite{BZ2} corresponds to $Y_2(q)$. Although this is not really obvious from the definition of $\beta_q(2)$, we have in fact via \eqref{justifqanalog} the following identity, which can also be deduced from \eqref{Ysbetaq}:
$$\beta_q(2)=Y_2(q)=\sum_{k\geq0}(-1)^{k}\frac{q^{2k+1}}{(1-q^{2k+1})^{2}}\cdot$$
There are many similarities between the diophantine behaviour of the values of Riemann's zeta function at even positive integers and the values of Dirichlet's beta function at odd positive integers. However, no analogy to Apery's famous result \cite{Ap}  $\zeta(3)\notin\Q$ has been found for $G$ yet. Indeed, the linear forms built in \cite{RZ} do not show that $G$ is irrational. Moreover, even the denominators Conjecture formulated in \cite{RZ} do not give the arithmetic nature of $G$. We point out that this denominators Conjecture was proved in \cite{R} through Pad\'e approximants, and then in a simpler way by using transformation formulae for hypergeometric series in \cite{KR}. \\
Now recall the linear combination for $G$ studied in \cite{KR}:
\begin{equation}\label{GKR}
n!\sum_{k\geq
1}(-1)^{k}\left(k+\frac{n-1}{2}\right)\frac{(k-n)_n(k+n)_{n}}{(k-1/2)_{n+1}^{3}}=a_nG-b_n,
\end{equation}
where the coefficients $a_n$ and $b_n$ are explicitely given in \cite{KR}, and we recall $(x)_n:=x(x+1)\dots(x+n-1)$. We want to point out that our linear combination \eqref{formelineaire} gives a $q$-analogue of \eqref{GKR}. Indeed, for any odd positive integer $n$, $A=3$ and $r=1$, \eqref{formelineaire} is:
\begin{multline}\label{qG}
(q)_n\sum_{k\geq
1}(-1)^{k+1}q^{(2k-1)(n+1)/4}(1-q^{k+(n-1)/2})\frac{(q^{k-n},q^{k+n})_{n}}{(q^{k-1/2})_{n+1}^{3}}\\
=A_n(q)\beta_{\sqrt{q}}(2)+B_n(q),
\end{multline}
with
$$A_n(q):=\sum_{j=0}^n(-1)^jq^{1/2-j}(2d_{2,j,n}(q)+d_{3,j,n}(q))$$
and 
\begin{multline*}
B_n(q):=\sum_{s=1}^{3}\sum_{j=1}^n\sum_{k=1}^j(-1)^{j+k}\left(\frac{q^{k-j}d_{s,j,n}(q)}{(1-q^{k-1/2})^s}+\frac{q^{-k+j}d_{s,j,n}(1/q)}{(1-q^{-k+1/2})^s}\right)\\
-\frac{1}{2}\sum_{j=0}^n(-1)^jq^{1/2-j}d_{1,j,n}(q).
\end{multline*}
Multiplying \eqref{qG} by $(1-q^{1/2})^2$, then letting $q$ tend to $1$, we get by using \eqref{qanal}:
\begin{equation}\label{G}
-\frac{1}{2}\,n!\sum_{k\geq
1}(-1)^{k}\left(k+\frac{n-1}{2}\right)\frac{(k-n)_n(k+n)_{n}}{(k-1/2)_{n+1}^{3}}=\alpha_nG+\beta_n,
\end{equation}
where $\alpha_n:=\lim_{q\to1}A_n(q)$ and $\beta_n:=\lim_{q\to1}(1-q^{1/2})^2B_n(q)$. If the hypergeometric series on the left-hand side of \eqref{G} is multiplied by $-2$, we obtain the left-hand side of \eqref{GKR}. Moreover, by using the definition \eqref{Rn} of $R_n$, the identity \eqref{Alpha}, and \eqref{c2}, a direct calculation gives:
\begin{multline*}
\alpha_n=-2\sum_{j=0}^n(n-2j)\bi{n}{j}^3\bi{n+j-\frac{1}{2}}{n}\bi{2n-j-\frac{1}{2}}{n}\\
\times\left(\frac{1}{n-2j}+3H_j+H_{j-\frac{1}{2}}-H_{n+j-\frac{1}{2}}\right),
\end{multline*}
where for any positive integer $m$, $H_m:=\sum_{j=1}^{m}\frac{1}{j}$ is the $m$-th harmonic number, whose definition is extended to half-integers by $H_m:=\sum_{j=1}^{\lfloor m\rfloor+1}\frac{1}{m-j+1}\cdot$ This shows that for any odd positive integer $n$, $\alpha_n=-a_n/2$ (see \cite{KR}). Thus our linear combination \eqref{qG} is indeed a $q$-analogue of the one in \cite{KR}, as we necessarily have $\beta_n=b_n/2$. 

\subsection{A $q$-denominators Conjecture}

As in \cite{JM}, it seems possible to refine Lemma~4.5, by considering another common denominator to the $\hat{P}_{j,n}(q^2)$, having the form $\tilde{D}_n(q)=D_n(q)/\Delta_n(1/q)$. We formulate in this direction the following $q$-denominators Conjecture: 
\begin{conj}
Let $n$ and $A$ be odd positive integers, and set $r\in\N^*$ such that $A-2r>0$. For $\alpha=-A/4-r^2$, there exist real numbers $\beta$ and $\gamma$ only depending on $A$ and $r$ such that if we set:
\begin{equation*}\label{DenominateurcommuntildeDn}
\tilde{D}_n(q):=(A-1)!\,q^{\lfloor\alpha n^2+\beta
n+\gamma\rfloor}\varphi_n(1/q)^{2r}\,d_{2n}(1/q)^{A-1},
\end{equation*}
then we get:
$$\tilde{D}_n(q)\hat{P}_{j,n}(q^2)\in\Z\left[\frac{1}{q}\right]\;\;\forall
j\in\{0,2,4,\dots,A-1\}.$$
\end{conj}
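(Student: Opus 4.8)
The point is to suppress the factor $\Delta_n(1/q)$ from the common denominator of Lemma~4.5. For $j\in\{2,4,\dots,A-1\}$ this is immediate from \eqref{denominatpjchap}: by the identity $d_n(x^2)=\prod_{u\le n}\phi_u(x)\prod_{n<u\le 2n,\,u\,\text{even}}\phi_u(x)$ proved inside Lemma~4.5, $d_n(1/q^2)$ divides $d_{2n}(1/q)$, hence (as $j\ge 2$) $d_n(1/q^2)^{A-j}$ divides $d_{2n}(1/q)^{A-1}$, so $\tilde D_n(q)\hat P_{j,n}(q^2)\in\Z[1/q]$. Thus only $\hat P_{0,n}(q^2)$ is at stake. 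By \eqref{denominatp0chap} the denominator of $\hat P_{0,n}(q^2)\in\Q(1/q)$ is built solely from powers of $q$ and from the $\phi_u(1/q)$ with $u\le 2n$; for the $q$-power and for every even $u$ the bounds given by $D_n(q)$ and by $\tilde D_n(q)$ coincide (they differ only through $\Delta_n(1/q)$, which has only odd-index factors), while for odd $t$ with $1\le t\le 2n-1$ one has $ord_{\phi_t(1/q)}(D_n)=A$ and $ord_{\phi_t(1/q)}(\tilde D_n)=A-1$. Hence, granting $D_n(q)\hat P_{0,n}(q^2)\in\Z[1/q]$, the whole conjecture reduces to the single estimate
$$ord_{\phi_t(1/q)}\bigl(\hat P_{0,n}(q^2)\bigr)\ \ge\ -(A-1)\qquad\text{for every odd }t\text{ with }1\le t\le 2n-1.$$

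\noindent Fix such a $t$ and a primitive $t$-th root of unity $\zeta$. From \eqref{100}, \eqref{Alpha}, the factorisation of $d_n(1/q^2)$ above, and the fact that $\varphi_n(1/q)$ involves only even-index cyclotomic polynomials, one checks that $d_{s,j,n}(q^2)$ has at $\zeta$ a pole of order at most $A-s$ when $t\le n$, and no pole at all when $n<t\le 2n-1$. Looking at \eqref{P0} (with $q$ replaced by $q^2$, so the typical summand is $(-1)^{j+k}q^{2k-2j}(1-q^{2k-1})^{-s}d_{s,j,n}(q^2)$), the only way to produce a pole of order $A$ at $\zeta$ is through the explicit factor $(1-q^{2k-1})^{-s}$ with $t\mid 2k-1$, its order $s$ combining with the order $A-s$ coming from $d_n(1/q^2)^{A-s}$ inside $d_{s,j,n}(q^2)$. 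When $n<t\le 2n-1$ this forces $s=A$ and the single index $k_0=(t+1)/2$ (for which $2k_0-1=t$); when $t\le n$, one or more indices $k$ occur and every $s\in\{1,\dots,A\}$ may contribute. The term $q^{-2n(r-1)}P_{0,n}(1,1/q^2)$ of \eqref{P0chap} behaves symmetrically, and $P_{1,n}(1,q^2)$ is regular at $\zeta$. So the estimate above is equivalent to the vanishing at $q=\zeta$ of the coefficient of $(1-q^t)^{-A}$ in the Laurent expansion of $\hat P_{0,n}(q^2)$.

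\noindent For $n<t\le 2n-1$ that coefficient is a finite linear combination of the values $d_{A,j,n}(\zeta^2)$ (indices $k_0\le j\le n$ from $P_{0,n}(1,q^2)$, and, after the substitution $j\mapsto n-j$, indices $0\le j\le n-k_0$ from $q^{-2n(r-1)}P_{0,n}(1,1/q^2)$), and using the functional equation \eqref{relationdj} together with $\zeta^t=1$ and the oddness of $A$ its vanishing reduces to the identity
$$\sum_{j=k_0}^{n}(-1)^{j}\zeta^{-2j}d_{A,j,n}(\zeta^2)\ =\ \sum_{j=0}^{n-k_0}(-1)^{j}\zeta^{-2j}d_{A,j,n}(\zeta^2).$$
For $s=A$ no derivative enters \eqref{c}, so $c_{A,j,n}$ — hence $d_{A,j,n}$ — is an explicit product of $q$-shifted factorials (a product of $q$-binomial coefficients, of the kind appearing in the formula for $\alpha_n$ in Section~6), and the last identity is then a $q$-binomial identity at $\omega=\zeta^2$, which I would establish by reducing the $q$-binomials modulo $\phi_t$ (a $q$-Lucas type argument). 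The case $t\le n$ is where the real difficulty lies: the order-$A$ residue at $\zeta$ genuinely mixes all values of $s$, so one must also feed in the explicit formulas \eqref{c2} for the lower coefficients $c_{s,j,n}$ (expressed, via Fa\`a di Bruno as in the proof of Lemma~4.4, through harmonic-sum weights), the cyclotomic arithmetic of Lemma~4.1, and the symmetry \eqref{relationcj}, and show that after collecting all triples $(s,j,k)$ with $t\mid 2k-1$ the resulting $(1-q^t)^{-A}$-coefficient of $\hat P_{0,n}(q^2)$ collapses to $0$ at $\zeta$. I expect this to be the main obstacle; it should nevertheless go through along the lines of the corresponding refinement for the $q$-analogues of $\zeta$ in \cite{JM}. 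Granting it for all odd $t\le 2n-1$, the displayed order estimate holds for every such $t$, whence $\tilde D_n(q)\hat P_{0,n}(q^2)\in\Z[1/q]$, and the conjecture follows; the exponents $\beta,\gamma$ may be kept exactly as in Lemma~4.5, since $\tilde D_n$ is obtained from $D_n$ merely by dropping $\Delta_n(1/q)$.
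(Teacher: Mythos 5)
This statement is labelled a \emph{Conjecture} in the paper for a reason: the authors do not prove it, and they explicitly say they expect a proof to require transformation formulae for basic hypergeometric series combined with arithmetical techniques as in \cite{JM}. Your reduction of the problem is sound as far as it goes: for $j\ge 2$ the claim is indeed already contained in \eqref{denominatpjchap}, since $d_n(1/q^2)^{A-j}$ divides $d_{2n}(1/q)^{A-1}$ once $j\ge 2$; and for $j=0$ you correctly isolate the issue as the single estimate $ord_{\phi_t(1/q)}\bigl(\hat P_{0,n}(q^2)\bigr)\ge -(A-1)$ for odd $t\le 2n-1$, i.e.\ the vanishing of the order-$A$ part of the pole at primitive $t$-th roots of unity, which by the structure of \eqref{P0} and \eqref{100} can only arise from the interplay of the explicit factors $(1-q^{2k-1})^{-s}$ with the poles of $d_{s,j,n}(q^2)$ coming from $d_n(1/q^2)^{A-s}$.

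The gap is that this vanishing is precisely the content of the conjecture, and you do not establish it. For $n<t\le 2n-1$ you reduce it to an identity among the $d_{A,j,n}(\zeta^2)$ and assert it should follow from a $q$-Lucas type reduction of $q$-binomial coefficients modulo $\phi_t$, but you neither write down the explicit product formula for $d_{A,j,n}$ nor verify the identity; for $t\le n$, where the residue of order $A$ mixes all $s\in\{1,\dots,A\}$ and all $k$ with $t\mid 2k-1$, you explicitly concede the argument ("I expect this to be the main obstacle"). A minor further inaccuracy: $P_{1,n}(1,q^2)$ is not regular at such $\zeta$ when $t\le n$ (it can have a pole of order up to $A-1$ through $d_{1,j,n}(q^2)$); this is harmless for the order-$A$ coefficient but should not be stated as regularity. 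So what you have is a correct reformulation of the conjecture plus a plan of attack, not a proof; the decisive cancellation — which the authors themselves believe requires hypergeometric transformation formulae — remains unproved.
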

In our opinion, this Conjecture should be solved by using transformation formulae for basic hypergeometric series, together with arithmetical techniques similar to those used in \cite{JM}. Proving this conjecture would imply for $1/q\in\Z\setminus\{-1;1\}$ and any odd integer $A\geq 3$:
\begin{equation*}
\dim_{\Q}\left(\Q+\Q\beta_q(2)+\dots+\Q\beta_q(A-1)\right)\geq
g(A),
\end{equation*}
where
$$
g(A)=\max_{r\in\N\atop 1\leq r<A/2}g(r;A)\;\;\;\;\mbox{and}\;\;g(r;A):=\frac{4rA+A-4r^2}{\left(\frac{48}{\pi^2}+2\right)A+8r^2-\frac{48}{\pi^2}+\frac{16r}{3}}\cdot
$$
Unfortunately, and unlike the case of $\zeta_q$ in \cite{JM}, solving this conjecture would not give a refinement of the quantitative version in Corollary~1.3. Indeed, although $g(A)>f(A)$ for all $A$, we have $1>g(19)\simeq0.988>f(19)\simeq0.973$ and $g(21)\simeq1.042>f(21)\simeq1.028>1$.

\small

\vspace{1cm}

\noindent Fr\'ed\'eric Jouhet,\\
Universit\'e de Lyon, Universit\'e Lyon I, \\
CNRS, UMR 5208 Institut Camille Jordan,\\
B\^atiment du Doyen Jean Braconnier,\\
43, bd du 11 Novembre 1918, 69622 Villeurbanne Cedex, France \\
\texttt{jouhet@math.univ-lyon1.fr}

\vspace{0.5cm}

\noindent Elie Mosaki,\\
Universit\'e de Lyon, Universit\'e Lyon I, \\
CNRS, UMR 5208 Institut Camille Jordan,\\
B\^atiment du Doyen Jean Braconnier,\\
43, bd du 11 Novembre 1918, 69622 Villeurbanne Cedex, France \\
\texttt{mosaki@math.univ-lyon1.fr}

\end{document}